\newtheorem{theorem}{Theorem}[section]
\newtheorem{lemma}[theorem]{Lemma}
\newtheorem{claim}[theorem]{Claim}
\newtheorem{conjecture}[theorem]{Conjecture}
\newtheorem{question}[theorem]{Question}
\newtheorem*{keylemma}{Key Lemma}
\numberwithin{equation}{section}
\newcommand{\kl}{Key~Lemma}
\DeclareMathOperator{\epn}{epn}
\title{Tight bound for independent domination of cubic graphs without $4$-cycles}
\author{Eun-Kyung Cho\thanks{
Department of Mathematics, Hankuk University of Foreign Studies, Yongin-si, Gyeonggi-do, Republic of Korea.
 \texttt{ekcho2020@gmail.com}
}
\and Ilkyoo Choi\thanks{
Department of Mathematics, Hankuk University of Foreign Studies, Yongin-si, Gyeonggi-do, Republic of Korea.
\texttt{ilkyoo@hufs.ac.kr}
}
\and Hyemin Kwon\thanks{
Department of Mathematics, Ajou University, Suwon-si, Gyeonggi-do, Republic of Korea.
\texttt{khmin1121@ajou.ac.kr}
}
\and Boram Park\thanks{
Department of Mathematics, Ajou University, Suwon-si, Gyeonggi-do, Republic of Korea.
\texttt{borampark@ajou.ac.kr}
}}
\date\today
\begin{document}
 
\maketitle

\begin{abstract} 
Given a graph $G$, a {\it dominating set} of $G$ is a set $S$ of vertices such that each vertex not in $S$ has a neighbor in $S$.  
 The {\it domination number} of $G$, denoted  $\gamma(G)$, is the minimum size of a dominating set of $G$. 
The {\it independent domination number} of $G$, denoted $i(G)$, is the minimum size of a dominating set of $G$ that is also independent. 
 
Recently, Abrishami and Henning proved that if $G$ is a cubic graph with girth at least $6$, then $i(G) \le \frac{4}{11}|V(G)|$.
We show a result that not only improves upon the upper bound of the aforementioned result, but also applies to a larger class of graphs, and is also tight. 
Namely, we prove that if $G$ is a cubic graph without $4$-cycles, then $i(G) \le \frac{5}{14}|V(G)|$, which is tight.
Our result also implies that every cubic graph $G$ without $4$-cycles satisfies $\frac{i(G)}{\gamma(G)} \le \frac{5}{4}$, which  partially answers a question by O and West in the affirmative. 
\end{abstract}

\section{Introduction}

All graphs in this paper are finite and simple.
Given a graph $G$, let $V(G)$ and $E(G)$
denote the vertex set and the edge set, 
respectively, of $G$.
A {\it dominating set} of $G$ is a subset $S$ of $V(G)$ such that each vertex not in $S$ has a neighbor in $S$. 
The {\it domination number} of $G$, denoted $\gamma(G)$, is the minimum size of a dominating set of $G$.
Domination is an extensively studied classic topic in graph theory, to the point that 
there are several books focused solely on domination, see~\cite{book1,book2,book3,book4}.

A dominating set of a graph $G$ that is also an independent set in $G$ is an {\it independent dominating set} of $G$.
The {\it independent domination number} of $G$, denoted $i(G)$, is the minimum size of an independent dominating set of $G$. 
Note that every graph has an independent dominating set, as a maximal independent set is equivalent to an independent dominating set. 
This concept appeared in the literature as early as 1962 by Berge~\cite{berge1962theory} and Ore~\cite{ore1962theory}.
For a survey regarding independent domination, see~\cite{goddard2013independent}.

For a connected $k$-regular graph $G$ where $k\geq 1$, Rosenfeld~\cite{rosenfeld1964independent} showed that $i(G)\leq \frac{|V(G)|}{2}$, which is tight only for the balanced complete bipartite graph $K_{k, k}$. 
The next natural step is to try to improve the upper bound on the independent domination number when the balanced complete bipartite graph is excluded. 
Extending a result by Lam, Shiu, and Sun~\cite{lam1999independent}, Cho, Choi, and Park~\cite{cho2021independent} recently proved the following result: 

\begin{theorem}[\cite{cho2021independent}]\label{thm:kreg}
For $k \ge 3$, if $G$ is a connected $k$-regular graph that is not $K_{k, k}$, then $i(G)\le \frac{k-1}{2k-1}|V(G)|$.
\end{theorem}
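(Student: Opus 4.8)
The plan is to reduce the statement to a purely local edge-counting condition on the complement of a well-chosen independent dominating set, and then to secure that condition by an extremal choice of the set together with a discharging argument.

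First I would fix an independent dominating set $I$ and write $B = V(G)\setminus I$, so that $|V(G)| = |I| + |B|$. Since $G$ is $k$-regular and $I$ is independent, every edge incident to $I$ goes to $B$; hence the number of edges between $I$ and $B$ equals $k|I|$, and counting the same edges from the $B$-side gives $\sum_{v\in B} d_I(v) = k|I|$, where $d_I(v)$ and $d_B(v) = k - d_I(v)$ count the neighbours of $v$ inside $I$ and inside $B$. A direct rearrangement shows that the target $i(G) \le \frac{k-1}{2k-1}|V(G)|$ is implied by $k|I| \le (k-1)|B|$, and since $2e(G[B]) = \sum_{v\in B} d_B(v) = k|B| - k|I|$, this is equivalent to
\[
e(G[B]) \ \ge\ \tfrac{1}{2}|B|.
\]
So it suffices to produce one independent dominating set whose complement induces average degree at least $1$. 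It is worth noting that in $K_{k,k}$ every maximal independent set is a full side, whose complement is edgeless, so this condition necessarily fails there; this is exactly where the excluded graph enters.

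Next I would take $I$ to be a minimum independent dominating set, breaking ties if needed to maximize $e(G[B])$. Writing the target as $\sum_{v\in B}\bigl(d_B(v)-1\bigr) \ge 0$, the only negative contributions come from the \emph{lonely} vertices $Z = \{v\in B : d_B(v)=0\}$, namely those whose $k$ neighbours all lie in $I$; each contributes $-1$, while every vertex with $d_B(v)\ge 2$ contributes a surplus $d_B(v)-1\ge 1$ and vertices with $d_B(v)=1$ contribute $0$. Thus the whole problem collapses to proving
\[
\sum_{v\in B,\ d_B(v)\ge 2}\bigl(d_B(v)-1\bigr) \ \ge\ |Z|.
\]

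The crux — and the step I expect to be hardest — is to pay for the lonely vertices. For a lonely vertex $v$, replacing its entire neighbourhood $N(v)\subseteq I$ by $v$ yields an independent set of size $|I| - (k-1)$; since $k\ge 3$ this is strictly smaller than $I$, so by minimality it must fail to dominate, and every undominated witness $w$ must satisfy $w\notin N[v]$ and $N(w)\cap I\subseteq N(v)$. I would use these witnesses as a handle: either such a $w$ has a neighbour in $B$ and thus carries surplus $d_B(w)-1\ge 1$, or it is a twin of $v$ with $N(w)=N(v)$, and these degenerate clusters of mutual twins are precisely what connectivity and the hypothesis $G\ne K_{k,k}$ let me exclude. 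Accordingly I would set up a discharging rule sending one unit from each lonely vertex to nearby high-$B$-degree witnesses. The main difficulty is bounding the multiplicity with which any single witness is charged, so that its surplus $d_B(w)-1$ genuinely covers the demand; here $k$-regularity limits how many lonely vertices can share a common neighbourhood, and the same structural analysis must be made to balance for all $k\ge 3$ at once. Verifying that these local rules always settle in the claimed inequality is the technical heart of the argument.
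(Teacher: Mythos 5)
A preliminary remark: the paper you were given does not prove Theorem~\ref{thm:kreg} at all --- it is quoted from~\cite{cho2021independent} --- so there is no in-paper proof to compare against, and your proposal must be judged on its own merits. Your opening reduction is correct and clean: with $I$ a minimum independent dominating set and $B=V(G)\setminus I$, regularity plus independence give $2e(G[B])=k|B|-k|I|$, so the bound $i(G)\le\frac{k-1}{2k-1}|V(G)|$ is indeed equivalent to $e(G[B])\ge\frac12|B|$, i.e.\ to $\sum_{v\in B}(d_B(v)-1)\ge 0$; and the swap $I'=(I\setminus N(v))\cup\{v\}$ for a lonely vertex $v$, together with minimality of $I$, correctly produces witnesses $w\in B\setminus N[v]$ with $N(w)\cap I\subseteq N(v)$. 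You also correctly locate where $K_{k,k}$ must fail.

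However, the heart of the argument is missing, and one of its stated steps is false. First, a witness $w$ with a neighbour in $B$ has $d_B(w)\ge 1$ and hence surplus $d_B(w)-1\ge 0$, not $\ge 1$ as you assert; a witness of $B$-degree exactly $1$ pays nothing, and nothing in your setup rules out that every witness of every lonely vertex has $d_B(w)=1$ (the constraint $N(w)\cap I\subseteq N(v)$ is perfectly compatible with $|N(w)\cap I|=k-1$). Second, the twin case is not disposed of by ``connectivity and $G\ne K_{k,k}$'': non-adjacent twins occur in plenty of connected $k$-regular graphs other than $K_{k,k}$ --- for instance, take two disjoint copies of $K_{2,3}$ and add a perfect matching between the two triples of degree-$2$ vertices; this is a connected $3$-regular graph on $10$ vertices, not $K_{3,3}$, in which each copy's two degree-$3$ vertices are non-adjacent twins. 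Excluding lonely twin clusters therefore requires a genuine argument tied to the extremal choice of $I$ (e.g.\ iterating the swap with $(I\setminus N(v))\cup\{v,w\}$, which is again independent and smaller when $k\ge 3$), not to the global hypotheses alone. Third, the multiplicity analysis --- bounding how many lonely vertices charge a common witness so that its surplus actually covers the demand --- is entirely absent, and you yourself flag it as the technical heart. As it stands, the proposal establishes only a standard equivalent reformulation of the theorem; the discharging that would prove it is a plan, not a proof, and in the one place where it commits to a concrete inequality, that inequality is wrong.
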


Note that the bound in Theorem~\ref{thm:kreg} is tight for $k \in \{3,4\}$ by $C_5\square K_2$ and the $4$-regular expansion of a $7$-cycle, but the tightness of the bound is unknown for $k \ge 5$.
See the two left figures of Figure~\ref{fig:tight} for an illustration.
To our knowledge, this is the best upper bound on the independent domination number not only for cubic graphs, but also for regular graphs in general.

Another popular way to exclude the complete bipartite graph is to impose restrictions on cycle lengths. 
Recall that the {\it girth} of a graph is the length of a smallest cycle in the graph.
According to~\cite{goddard2013independent}, the following conjecture was made by  Verstra\"ete:

\begin{conjecture}[see~\cite{goddard2013independent}]
If $G$ is a cubic graph with girth at least $6$, then $i(G) \le \frac{|V(G)|}{3}$.
\end{conjecture}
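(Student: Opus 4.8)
Since $G$ has girth at least $6$ it is in particular triangle-free and not $K_{3,3}$, so Theorem~\ref{thm:kreg} already gives $i(G)\le\frac25|V(G)|$; the goal is to push this down to $\frac13|V(G)|$. I would begin by recording the exact counting obstruction. Fix an independent dominating set $S$, and let $a$, $b$, $c$ denote the numbers of vertices of $V(G)\setminus S$ with exactly $1$, $2$, $3$ neighbours in $S$. Since $S$ is independent, all $3|S|$ edges leaving $S$ land in $V(G)\setminus S$, so $a+2b+3c=3|S|$, and a short computation shows that the desired bound $|S|\le\frac13|V(G)|$ is \emph{equivalent} to the inequality $a\ge b+3c$. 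Thus the whole difficulty is to produce an $S$ in which privately dominated vertices outnumber the over-dominated ones by this margin.

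My plan is an amortized (discharging) argument built on this identity. Give every vertex charge $1$ and redistribute so that each $s\in S$ ends with charge at least $3$ while no vertex drops below $0$; summing then yields $|V(G)|\ge 3|S|\ge 3\,i(G)$. Rather than take an arbitrary maximal independent set, I would construct $S$ greedily, repeatedly adding a vertex that dominates as many still-undominated vertices as possible, and then clean it up by local exchanges --- swapping a vertex of $S$ for some of its private neighbours whenever that strictly decreases over-domination. The role of girth $6$ is to make the local picture tree-like to distance two: there are no triangles, so the three neighbours of each $s$ are pairwise nonadjacent; no $4$-cycles, so any two vertices of $S$ share at most one common neighbour, which prevents over-domination from concentrating; and no $5$-cycles, so the second neighbourhoods of the three neighbours of $s$ overlap only in tightly controlled ways. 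Each $s\in S$ therefore sees a genuine reservoir of distinct nearby vertices from which to collect charge.

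The discharging rules themselves would send charge from $V(G)\setminus S$ back toward $S$: a privately dominated ($a$-type) vertex donates the bulk of its unit to its unique $S$-neighbour, while an over-dominated ($b$- or $c$-type) vertex splits its smaller contribution among its two or three $S$-neighbours. An $s$ all of whose neighbours are privately dominated collects $3\cdot\frac23=2$ and finishes at exactly $3$; the work is entirely in showing that an $s$ adjacent to over-dominated vertices can recoup the shortfall from second-neighbours, which is where the tree-like structure and the guaranteed private neighbours (supplied by the local-exchange step) are used. Equivalently, one argues directly that the cleaned-up $S$ satisfies $a\ge b+3c$.

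The hard part will be the last gap, from $\frac{5}{14}$ down to $\frac13$. The techniques that give $\frac{5}{14}$ for the broader no-$4$-cycle class are the natural starting engine, but the configurations that are tight for that bound are exactly the clusters of over-dominated vertices this scheme must eliminate, and forbidding them seems to require exploiting the absence of $5$-cycles (and perhaps of longer cycles) in a global rather than purely local way. Guaranteeing that the local-exchange process terminates while simultaneously controlling every such configuration is, I expect, the real obstacle --- and it is precisely this step that keeps the statement a conjecture rather than a theorem.
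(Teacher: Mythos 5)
You have not proved the statement, and neither does the paper: this is Verstra\"ete's conjecture, which the paper records as open. The paper's own contribution (Theorem~\ref{mainthm:cubic}) is the weaker bound $i(G)\le\frac{5}{14}|V(G)|$ for the larger class of cubic graphs without $4$-cycles, proved by a minimal-counterexample weighting argument (the Key Lemma), not by discharging on a fixed independent dominating set. So there is no proof in the paper to match your proposal against, and your closing admission that the last step ``keeps the statement a conjecture rather than a theorem'' is the accurate assessment of your own text.

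That said, let me pinpoint what is sound and where the genuine gap lies. Your counting reformulation is correct: with $a,b,c$ counting vertices outside $S$ having $1,2,3$ neighbours in $S$, the identities $a+2b+3c=3|S|$ and $a+b+c=|V(G)|-|S|$ do make $|S|\le\frac13|V(G)|$ equivalent to $a\ge b+3c$. Everything after that is a plan, not an argument. The discharging rules are never fully specified: you say an $a$-type vertex donates ``the bulk'' of its unit and that a deficient $s\in S$ ``recoups the shortfall from second-neighbours,'' but you give no rule for that second-neighbourhood transfer, no case analysis showing every $s$ reaches charge $3$, and no proof that your local-exchange cleanup terminates or that the set it produces satisfies $a\ge b+3c$. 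Girth $6$ does not locally forbid a vertex of $S$ all of whose neighbours are over-dominated, so the shortfall cases are real, and the graphs of Duckworth and Wormald~\cite{duckworth2010} (girth $5$, $i(G)=\frac13|V(G)|$) together with the $14$-vertex graph of Figure~\ref{fig:tight} (girth $5$, $i(G)=\frac{5}{14}|V(G)|>\frac13|V(G)|$) show the bound would fail without the full strength of the no-$5$-cycle hypothesis --- exactly the hypothesis your sketch never actually uses in a verifiable step. If you want a workable starting engine, note that the paper's Key Lemma technique is orthogonal to your scheme: it bounds a minimum counterexample via weight changes $c_G(N[S'])$ for small independent sets $S'$, and it is precisely calibrated to the weight $5$ per $3$-vertex, i.e., to the constant $\frac{5}{14}$; pushing it to $\frac13$ would require new ideas beyond re-weighting, since the configurations tight for $\frac{5}{14}$ (such as $T_{k,\ell}$-like pieces) must be excluded globally, as you correctly suspect.
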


The girth condition in the conjecture cannot be reduced by the right graph of Figure~\ref{fig:tight}, as it has 14 vertices, girth 5, and independent domination number $5$. 
Note that Duckworth and Wormald~\cite{duckworth2010} constructed an infinite family of connected cubic graphs with girth $5$ whose independent domination number is exactly $\frac{|V(G)|}{3}$. 
For other results regarding the independent domination number of cubic graphs with girth 5, see~\cite{harutyunyan2021}.

Graphs without odd cycles, namely bipartite graphs, is also an intensively researched class of graphs. 
Goddard et al.~\cite{goddard2012independent} made the following conjecture regarding the independent domination number of bipartite cubic graphs:

\begin{conjecture}[~\cite{goddard2012independent}]\label{conj-bi}
If $G$ is a connected bipartite cubic graph that is not $K_{3,3}$, then $i(G)\leq \frac{4}{11}|V(G)|$.
\end{conjecture}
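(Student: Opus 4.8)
The plan is to split on whether $G$ contains a $4$-cycle. Because $G$ is bipartite it has no odd cycle, so its girth is at least $4$, and the only cycles shorter than $6$ it can contain are $4$-cycles. If $G$ has no $4$-cycle at all, then $G$ is a cubic graph without $4$-cycles, and the main result of this paper (namely $i(G)\le\frac{5}{14}|V(G)|$ for cubic graphs without $4$-cycles) already gives $i(G)\le\frac{5}{14}|V(G)|<\frac{4}{11}|V(G)|$, which is stronger than required. Hence it suffices to establish the bound under the extra assumption that $G$ contains at least one $4$-cycle. Note that Theorem~\ref{thm:kreg} alone does not suffice here, since for $k=3$ it only yields the weaker bound $\frac{2}{5}|V(G)|$.

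For the remaining case I would fix a bipartition $V(G)=A\cup B$ and build an independent dominating set $S$ greedily, repeatedly choosing a vertex $v$, placing it in $S$, and deleting $v$ together with its now-dominated neighbors. Each chosen vertex dominates at most $4$ vertices (itself and its three neighbors), so to reach $i(G)\le\frac{4}{11}|V(G)|$ --- equivalently $|V(G)|\ge\frac{11}{4}|S|$ --- I must show that, amortized over the whole process, each vertex placed in $S$ is responsible for at least $\frac{11}{4}$ vertices. Since each selection covers at most $4$ vertices, the entire loss against the ideal value $4$ comes from vertices that are dominated more than once, so the heart of the matter is bounding this multiplicity by a discharging/amortized counting scheme adapted to the bipartite structure.

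This is exactly where $4$-cycles are the enemy, and why allowing them is expected to degrade the attainable ratio from $\frac{5}{14}$ to $\frac{4}{11}$. If two vertices $a_1,a_2\in A$ share two common neighbors $b_1,b_2$ (a $4$-cycle $a_1b_1a_2b_2$), then any independent set containing both $a_1$ and $a_2$ dominates $b_1$ and $b_2$ twice, forcing wasted coverage. The strategy would therefore be to make the greedy choices so as to minimize such double-domination --- for instance, never selecting two $A$-vertices lying on a common $4$-cycle, and instead covering one side of each $4$-cycle by a single well-chosen vertex --- and then to run a discharging argument in which each $4$-cycle is charged a bounded deficit while the $4$-cycle-free portions of $G$ retain the stronger efficiency underlying the $\frac{5}{14}$ estimate.

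The main obstacle, which I expect to be the crux of the proof and which explains why this bipartite conjecture remains open while its $4$-cycle-free analogue is tractable, is the analysis of regions in which $4$-cycles overlap heavily --- sharing vertices or edges, as in a $K_{2,3}$, a prism, or the cube $Q_3$. In such regions the local gadgets cannot be taken disjoint, so the deficit charged to each $4$-cycle cannot be bounded purely locally; the discharging must then move charge across gadget boundaries while staying tight on all of them simultaneously. A complete proof would require a structural classification of how $4$-cycles can cluster in a bipartite cubic graph, a discharging scheme that is tight precisely at the extremal graph (which, since $|V(G)|$ is even and $\frac{4}{11}|V(G)|$ must be an integer, has order divisible by $22$), and a verification that $K_{3,3}$ is the unique configuration for which the bound fails.
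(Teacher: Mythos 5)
You are attempting to prove a statement that the paper itself does not prove: Conjecture~\ref{conj-bi} is stated there as an open conjecture of Goddard et al.~\cite{goddard2012independent}, with the paper noting only that it is known when $4$-cycles are additionally forbidden~\cite{henning2014independent}. So there is no paper proof to match, and the question is whether your argument closes the conjecture on its own. It does not. The first half is correct and is essentially the paper's own observation: if $G$ is bipartite cubic with no $4$-cycle, then Theorem~\ref{mainthm:cubic} gives $i(G)\le \frac{5}{14}|V(G)| < \frac{4}{11}|V(G)|$ (since $55<56$), and you are also right that Theorem~\ref{thm:kreg} is too weak here ($\frac{2}{5}$ for $k=3$). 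But this case was already settled; the entire content of the conjecture is the remaining case, where $G$ contains $4$-cycles, and there your text is a research plan rather than a proof. No greedy selection rule is actually specified; the amortized claim that each selected vertex is ``responsible for at least $\frac{11}{4}$ vertices'' is never established; no discharging rules are written down, no configurations are analyzed, and you yourself concede that in regions where $4$-cycles cluster (e.g.\ near $K_{2,3}$-subgraphs, prisms, or $Q_3$) the deficit ``cannot be bounded purely locally'' and that a structural classification ``would be required.'' That concession is precisely the unproved crux, so the proposal establishes nothing beyond what Theorem~\ref{mainthm:cubic} already yields.

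Two smaller issues in the sketched half. First, the accounting ``each chosen vertex dominates at most $4$ vertices'' is only the trivial upper bound; the difficulty is the opposite direction, namely lower-bounding the number of \emph{newly} dominated vertices per selection late in the process, when most neighbors are already dominated --- this is exactly what the paper's weight function $w_G$ and the quantity $c_G(\cdot)$ in its Key Lemma are engineered to control, and your sketch has no analogue of that mechanism. Second, the remark that an extremal graph must have order divisible by $22$ presumes the bound is attained with equality; it is harmless but contributes nothing to proving the inequality. As it stands, the conjecture remains open, and your proposal proves only the subcase already implied by the paper.
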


If the above conjecture is true, then it is tight by a graph displayed in~\cite{goddard2012independent}.
Henning, L\"owenstein, and Rautenbach \cite{henning2014independent} showed that Conjecture~\ref{conj-bi} is true if a $4$-cycle is also forbidden, so the conjecture is true for bipartite cubic graphs with girth at least $6$. 
% 
% \begin{theorem}[\cite{henning2014independent}]\label{thm:hlr_cubic}
% If $G$ is a bipartite cubic graph of girth at least $6$, then $i(G) \le \frac{4}{11}|V(G)|$.
% \end{theorem}
% 
Recently, in 2018, Abrishami and Henning~\cite{abrishami2018independent} improved the aforementioned result by showing that the same conclusion holds when the bipartite condition (but keeping the restriction on the girth) is removed. 
% However, the tightness of the coefficient $\frac{4}{11}$ remained  open. 
The following is the result by Abrishami and Henning:

\begin{theorem}[\cite{abrishami2018independent}]\label{thm:3reggirth6}
If $G$ is a cubic graph with girth at least $6$, then $i(G) \le \frac{4}{11}|V(G)|$.
\end{theorem}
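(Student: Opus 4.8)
The plan is to prove the stronger, tight bound $i(G)\le\tfrac{5}{14}|V(G)|$ for every cubic graph $G$ without $4$-cycles by constructing a good independent dominating set and analyzing it with a charging argument. Write $n=|V(G)|$; since $i$ is additive over components and the hypotheses are hereditary, I may assume $G$ is connected. The single most useful consequence of forbidding $4$-cycles is purely local: any two vertices of $G$ have at most one common neighbor (two common neighbors $p,q$ of $x,y$ would give the $4$-cycle $x\,p\,y\,q\,x$). Hence for any independent dominating set $D$ and any two $v,w\in D$, the sets $N(v)$ and $N(w)$ meet in at most one vertex, so $v$ and $w$ dominate at most one vertex in common. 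This is the lever that prevents the heavy double counting responsible for the $K_{3,3}$-type extremal behavior.

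Next I would set up the counting. Fix an independent dominating set $D$ and give every vertex charge $1$. Each dominated vertex $u\in V(G)\setminus D$ sends charge $\tfrac{1}{d(u)}$ to every neighbor it has in $D$, where $d(u)=|N(u)\cap D|\in\{1,2,3\}$, while each $v\in D$ keeps its own unit. Since every neighbor of $v\in D$ lies outside $D$, the charge landing on $v$ is $\mu(v)=1+\sum_{u\in N(v)}\tfrac{1}{d(u)}$, and charge is conserved, so $\sum_{v\in D}\mu(v)=n$. Because the average of $\mu$ over $D$ equals $n/|D|$, the inequality $|D|\le\tfrac{5}{14}n$ is exactly the statement that this average is at least $\tfrac{14}{5}$; and any such $D$ gives $i(G)\le|D|\le\tfrac{5}{14}n$, so I need not take $D$ minimum. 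A vertex $v$ with three privately dominated neighbors has $\mu(v)=4$, whereas one whose three neighbors are all triply shared has $\mu(v)=2$; measuring the surplus by $|\epn(v,D)|$ (each private neighbor contributes $+1$ over the baseline), the whole task reduces to showing that the \emph{poor} vertices, those with $\mu(v)<\tfrac{14}{5}$, can always be paid for by the rich ones.

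The crux is therefore to choose $D$ extremally and then discharge. I would take $D$ to be an independent dominating set that, among all such sets, maximizes the number of external private neighbors (equivalently, minimizes the poor vertices), and then exploit extremality through local swaps: any configuration around a poor vertex that could be locally rearranged into a set with more private neighbors is forbidden. Around a poor $v$ all three neighbors are shared, and the at-most-one-common-neighbor fact forces these neighbors to be dominated by three \emph{distinct} further members of $D$, which pins the local structure down tightly enough to run a discharging rule shipping a fixed surplus from rich vertices to nearby poor ones and certifying the average $\tfrac{14}{5}$. I expect the main obstacle to lie precisely here: because only $4$-cycles are excluded, triangles and $5$-cycles survive, and these are exactly the configurations producing locally inefficient domination; the case analysis that rules out dense clusters of poor vertices, and that verifies surplus can always reach every poor vertex, is where the genuine work and the fine-tuning yielding the sharp constant $\tfrac{5}{14}$ (rather than a weaker one) will concentrate.

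Finally, I would confirm tightness by exhibiting an explicit cubic, $4$-cycle-free graph on $14$ vertices with independent domination number $5$, together with its disjoint copies, showing the constant cannot be lowered. The ratio consequence $i(G)/\gamma(G)\le\tfrac54$ of the abstract then follows by combining the new bound $i(G)\le\tfrac{5}{14}n$ with a matching lower bound $\gamma(G)\ge\tfrac{2}{7}n$ for this class, the two constants agreeing on the same $14$-vertex extremal example.
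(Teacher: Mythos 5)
Your proposal does not prove the statement; it defers exactly the part that constitutes the proof. The reduction to connected graphs, the observation that forbidding $4$-cycles means two vertices share at most one common neighbor, and the charging identity $\sum_{v\in D}\bigl(1+\sum_{u\in N(v)}\tfrac{1}{|N(u)\cap D|}\bigr)=n$ are all correct, and the arithmetic translation ``$|D|\le\tfrac{5}{14}n$ iff the average charge on $D$ is at least $\tfrac{14}{5}$'' is fine. But the entire content of the theorem is the claim that some independent dominating set achieves this average, and your plan for it --- choose $D$ maximizing external private neighbors, then do local swaps and discharging --- is announced rather than executed, and you yourself flag the case analysis as the place where ``the genuine work will concentrate.'' There is also a structural reason to doubt that this particular scheme closes: in the tight examples (the $14$-vertex graph with $i=5$) the average charge is \emph{exactly} $\tfrac{14}{5}$, so your discharging has zero slack and purely local swaps around a single poor vertex cannot suffice; some global mechanism is needed. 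The paper's actual proof supplies two such mechanisms that your plan lacks. First, it proves a stronger \emph{weighted subcubic} statement ($14\,i(G)\le 14n_0+9n_1+6n_2+5n_3$) by minimal counterexample, precisely because deleting a closed neighborhood $N[S]$ from a cubic graph leaves a subcubic graph; the Key Lemma $14|S|>w_G(N[S])-c_G(N[S])$ then lets one apply induction after removing $N[S]$ for \emph{any} independent set $S$, which is the engine behind all the structural claims. A fixed extremal set in a cubic graph gives you no analogous induction. Second, even after all local configurations are forbidden, the paper still needs a genuinely global step: it extracts a cycle $C$ in $G[A_2\cup B_1]$, defines the sets $X,Y,X^*,Y^*$, applies the Key Lemma to the large independent set $X\cup Y$, and reaches a contradiction via a discharging over the boundary edges $\partial(N[X\cup Y])$. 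Nothing in your sketch anticipates this global phase.

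Two smaller points. The theorem you were asked to prove only needs girth at least $6$, so proving the stronger $4$-cycle-free bound is a legitimate (and is the paper's) strategy; that part of your framing is fine. However, your closing remark that the ratio $i(G)/\gamma(G)\le\tfrac{5}{4}$ would follow from a lower bound $\gamma(G)\ge\tfrac{2}{7}|V(G)|$ is wrong as stated: no such lower bound holds for this class in general (cubic $4$-cycle-free graphs can have dominating sets close to the trivial bound $|V(G)|/4$, e.g.\ via efficient dominating sets in suitable generalized Petersen graphs), and the paper instead derives the ratio from near independent dominating sets (Lemma~\ref{clm:igd}) combined with Theorem~\ref{mainthm:cubic}, splitting on whether $n_0(G[D])\ge n_1(G[D])$ for a carefully chosen minimum dominating set $D$.
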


Our main result improves Theorem~\ref{thm:3reggirth6} in a strong sense. 
We not only improve the bound from $\frac{4}{11}$ to $\frac{5}{14}$, but also enlarge the class of graphs the theorem applies to. 
Moreover, the new bound is also tight, by the right graph in Figure~\ref{fig:tight}, which has $14$ vertices and independent domination number $5$. 
We now explicitly state our  main result: 
% Moreover, our bound is tight. 
% improve the result by Abrishami and Henning in various ways into saying that we can change the condition of girth at least $6$ into having no $4$-cycles.
% In other words, we prove the following statement:

\begin{theorem}\label{mainthm:cubic}
If $G$ is a cubic graph without $4$-cycles,
then $i(G) \le \frac{5}{14}|V(G)|$.
\end{theorem}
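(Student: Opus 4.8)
My plan is to prove the statement directly by a discharging argument applied to a carefully chosen independent dominating set. Write $n=|V(G)|$ and let $S$ be an independent dominating set of $G$ of minimum size; among all minimum independent dominating sets I would additionally fix one that is extremal for a secondary parameter (for instance, one maximizing the number of edges of $G$ with both endpoints outside $S$), so that local exchange arguments become available. Since $|S|\le\frac{5}{14}n$ is equivalent to $9|S|\le 5(n-|S|)$, the target bound $i(G)\le\frac{5}{14}n$ follows once I show $9|S|\le 5|V(G)\setminus S|$; that is, it suffices to distribute one unit of charge from each vertex of $V(G)\setminus S$ so that every vertex of $S$ ends up with charge at least $\frac95$.

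Next I would classify each vertex $u\in V(G)\setminus S$ by the number $d_S(u)\in\{1,2,3\}$ of its neighbors in $S$, which is at least $1$ because $S$ dominates. The initial rule sends $\frac{1}{d_S(u)}$ from $u$ to each of its neighbors in $S$. As $S$ is independent, every $v\in S$ has exactly three neighbors, all outside $S$, so $v$ receives a sum of three terms drawn from $\{1,\tfrac12,\tfrac13\}$. A short case check shows that $v$ already meets the target $\frac95$ unless the multiset of its neighbors' types is one of $(1,3,3),(2,2,2),(2,2,3),(2,3,3),(3,3,3)$; call such a $v$ \emph{deficient}. All deficiencies therefore arise from vertices of $S$ whose dominated vertices are themselves heavily shared.

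The heart of the argument is to recover the missing charge for each deficient $v$ from its neighborhood, and this is exactly where the absence of $4$-cycles is decisive. If $u$ is a neighbor of $v$ with $d_S(u)\ge 2$, then $u$ has another neighbor $w\in S$, and the no-$4$-cycle condition guarantees that distinct shared neighbors of $v$ lead to \emph{distinct} second-neighbor vertices of $S$: a coincidence $u\sim w$, $u'\sim w$ with $u,u'\sim v$ would create the $4$-cycle $v\,u\,w\,u'\,v$. Consequently the $S$-vertices surrounding a deficient $v$ are genuinely spread out, and I would argue that enough of them carry surplus charge (they dominate private, type-$1$ vertices) to cover $v$'s deficit; minimality of $S$ is used to forbid the degenerate configurations in which every nearby $S$-vertex is itself deficient, since such a configuration should permit a strictly smaller independent dominating set. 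Adding secondary rules that move this surplus inward should then push every vertex of $S$ up to charge $\frac95$.

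The main obstacle I anticipate is precisely the bookkeeping of this second round of discharging: I must design the surplus-transfer rules so that no vertex's surplus is claimed twice and so that every one of the deficient patterns above is repaired simultaneously, across all local configurations consistent with cubicity and the $4$-cycle ban. This will likely require a finite but delicate case analysis of the second neighborhood of a deficient vertex, together with the carefully chosen extremal $S$ to rule out the worst overlaps. Verifying that the $14$-vertex tight example of Figure~\ref{fig:tight} is the equality case throughout would serve as a useful consistency check on the proposed rules.
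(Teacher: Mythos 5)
There is a genuine gap: the decisive step of your argument --- the ``secondary rules'' that move surplus charge to deficient vertices --- is never specified, and what you offer in its place is not an argument but a hope. Your initial bookkeeping is fine (the reduction of $i(G)\le\frac{5}{14}|V(G)|$ to $9|S|\le 5|V(G)\setminus S|$, the rule sending $\frac{1}{d_S(u)}$ to each $S$-neighbor, and the list of deficient patterns $(1,3,3),(2,2,2),(2,2,3),(2,3,3),(3,3,3)$ are all correct). But the claim that ``minimality of $S$ forbids the degenerate configurations in which every nearby $S$-vertex is itself deficient'' is unsubstantiated, and the natural exchange argument fails exactly on the worst patterns: if $v\in S$ has type $(2,2,2)$, $(2,3,3)$ or $(3,3,3)$, then $v$ has no external private neighbor, $S\setminus\{v\}$ fails to dominate $v$ itself, and every candidate replacement $u\in N(v)$ is adjacent to another $S$-vertex, so $(S\setminus\{v\})\cup\{u\}$ is not independent. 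Hence minimality of $S$ (even with your secondary extremal criterion) gives no handle on clusters of deficient vertices, and nothing in your setup bounds the distance over which surplus must travel --- yet discharging rules must have bounded radius. The no-$4$-cycle condition only yields distinctness of the second $S$-neighbors around a deficient vertex, not that any of them carries surplus. Since the bound is tight (total surplus exactly equals total deficit on the $14$-vertex extremal graph), any valid rule set must route charge with zero slack, so exhibiting and verifying the rules \emph{is} the proof; as written, it is absent.

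It is worth noting why the paper does not face this transport problem: it proves a stronger weighted statement for subcubic graphs ($14i(G)\le 14n_0+9n_1+6n_2+5n_3$) by taking a minimal counterexample, and its Key Lemma --- for any independent set $S$, $14|S| > w_G(N[S])-c_G(N[S])$ --- is powered by applying the inductive bound to $G-N[S]$. That inductive leverage lets it derive strong local structure (no $1$-vertices, the classification into $A_i,B_i$, a cycle in $G[A_2\cup B_1]$) before running a discharging argument on the quantity $c_G(U)$ for one carefully chosen set $U=N[X\cup Y]$, where all charge movement is over a single edge of $\partial(U)$. Your fixed-graph, fixed-$S$ approach forgoes this mechanism entirely, which is precisely why the missing second round cannot be waved away: without some global or inductive input, there is no reason the deficits of patterns like $(3,3,3)$ are locally repairable, and you would need either to supply the full rule set with its case analysis or to import a minimality-of-$G$ induction along the paper's lines.
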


\begin{figure}
    \centering 
    \includegraphics[scale=0.8, page=1]{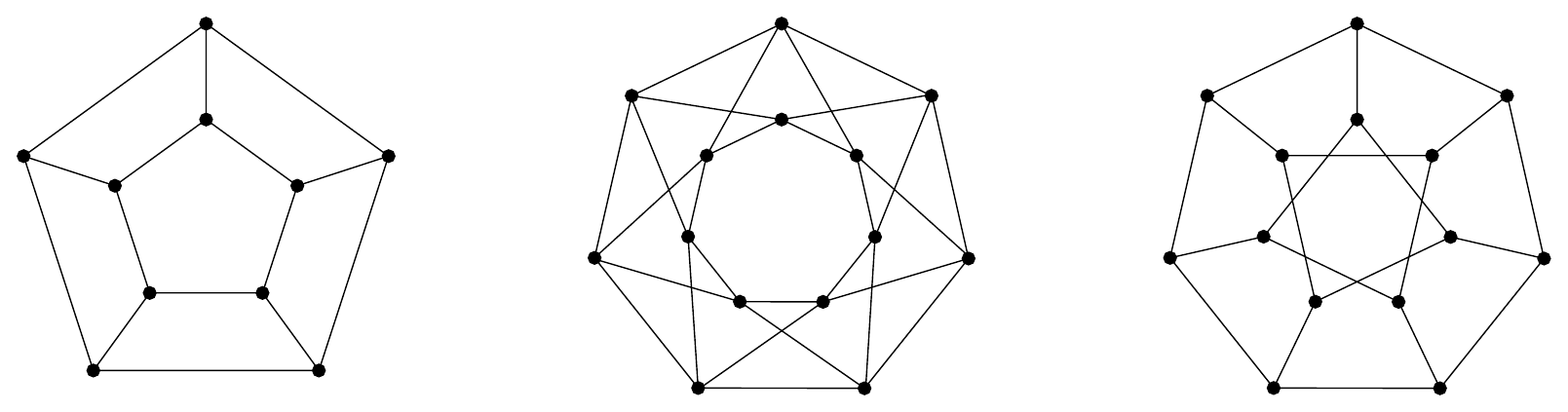}   
    \caption{%A graph on 14 vertices with independent domination number 5
    Graphs whose independent domination number achieves a tight upper bound}
    \label{fig:tight}
\end{figure}

Recall that both $K_{3, 3}$ and $C_5\square K_2$ must be excluded, which is done by the restriction on the $4$-cycle.
% Note that we cannot remove the condition of having no $4$-cycles in Theorem~\ref{mainthm:cubic}, for example,  $i(K_{3,3})=3=\frac{1}{2}|V(K_{3,3})|$. 
Moreover, in \cite{goddard2012independent}, there are infinitely many connected cubic graphs $G$ satisfying $i(G) = \frac{3}{8}|V(G)|$, which all have $4$-cycles. 

We actually prove a stronger statement, which applies to subcubic graphs.
Note that Theorem~\ref{mainthm:cubic} is a direct consequence of the following theorem:

\begin{theorem}\label{mainthm:subcubic}
If $G$ is a subcubic graph  without $4$-cycles, then $14i(G) \le 14n_0(G)+9n_1(G)+6n_2(G)+5n_3(G)$, where $n_i(G)$ denotes the number of vertices of degree $i$ in $G$.
\end{theorem}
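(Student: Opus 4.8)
The plan is to prove Theorem~\ref{mainthm:subcubic} by strong induction on the number of vertices (equivalently, by analyzing a minimal counterexample). Write $w(G)=14n_0(G)+9n_1(G)+6n_2(G)+5n_3(G)$ for the target weight, so the claim becomes $14\,i(G)\le w(G)$. Since the class of subcubic graphs without $4$-cycles is closed under taking induced subgraphs, deleting vertices keeps us inside the class, and the base case is the empty graph. The engine of the induction is the elementary reduction: for any vertex $v$, if $S'$ is a minimum independent dominating set of $G-N[v]$, then $\{v\}\cup S'$ is an independent dominating set of $G$ (it is independent because $S'$ avoids $N[v]$, and it dominates because $v$ dominates $N[v]$ while $S'$ dominates the rest), whence
\[
i(G)\le 1+i\big(G-N[v]\big).
\]
By the induction hypothesis applied to $G-N[v]$, the inductive step succeeds as soon as $w(G)-w\big(G-N[v]\big)\ge 14$, so the whole problem reduces to locating, in every graph, a vertex whose closed neighborhood carries enough weight.

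To estimate this weight drop, I would note that deleting $N[v]$ removes $\sum_{x\in N[v]}w(x)$ but \emph{raises} the weight of each surviving second neighbor whose degree drops, where the per-step jumps for degree $3\to2$, $2\to1$, $1\to0$ are $1,3,5$, respectively. Thus
\[
w(G)-w\big(G-N[v]\big)=\sum_{x\in N[v]}w(x)-\Delta,
\]
with $\Delta$ the total increase over the second neighbors. Here the absence of $4$-cycles is exactly what controls $\Delta$: two neighbors of $v$ cannot share a common neighbor other than $v$, so the edges leaving $N(v)$ land on \emph{distinct} vertices, each losing a single edge. In the cubic case this gives the clean computation $\sum_{x\in N[v]}w(x)=20$ and $\Delta\le 6$ (six distinct cubic second neighbors, each dropping $3\to2$), so the step removes weight at least $14$, with equality precisely in the triangle-free, large-girth situation, which is why the bound $\frac{5}{14}$ is tight.

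The main obstacle is the low-degree regime: when $v$ has degree $1$ or $2$, or when the second neighbors themselves have low degree, a single drop can cost as much as $5$, and a careless choice of $v$ can make $\Delta$ large enough that the naive reduction removes less than $14$. The remedy is to choose the dominating vertex so as to \emph{absorb} the large weight of nearby low-degree vertices: for a degree-$1$ vertex one dominates through its neighbor, so that the weight $9$ is deleted rather than merely exposed, and for a degree-$2$ vertex one selects a neighbor whose closed neighborhood contains the high-weight vertex. The genuinely delicate situations are \emph{clusters} of degree-$2$ (and degree-$1$) vertices, where every local reduction threatens to be too cheap; I expect these to require a finite case analysis of the possible local configurations, using external private neighbors ($\epn$) to track domination and $4$-cycle-freeness to rule out the densest arrangements. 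Proving that an admissible reduction always exists in these clustered configurations — rather than relying on the clean cubic step — is where I expect the real work of the proof to lie.
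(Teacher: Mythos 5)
Your setup coincides with the paper's: same weight function, minimal counterexample, and your reduction $i(G)\le 1+i(G-N[v])$ with the requirement $w(G)-w(G-N[v])\ge 14$ is exactly the paper's Key Lemma specialized to $S=\{v\}$ (the paper states it for any independent set $S$: in a minimal counterexample, $14|S| > w_G(N[S])-c_G(N[S])$). Your clean cubic computation ($20$ removed, $\Delta\le 6$ by $4$-cycle-freeness) is correct and is in effect how the paper proves that every $3$-vertex in the counterexample has a nearby $2$-vertex (Claim~\ref{clm:one3}). You also correctly predict that the low-degree clusters are where the work lies. But that is precisely where your plan has a genuine gap, in two respects. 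First, even the elimination of $1$-vertices cannot be done with singleton reductions: the paper's Claim~\ref{claim:n1} applies the Key Lemma to independent sets of size up to $5$ (e.g.\ $N[u_1,x_1,y_2,y_1',z]$), and at one point performs surgery that leaves the induced-subgraph world entirely --- deleting four vertices \emph{and adding an edge} $xy$, which requires a separate argument that no $4$-cycle is created (hence the ``$u_1$ not on a $6$-cycle'' dichotomy). Your observation that the class is closed under vertex deletion does not cover this step, and without it the minimality hypothesis cannot be invoked.

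Second, and more fundamentally, your hope of a ``finite case analysis of the possible local configurations'' in the clustered $2$-vertex regime cannot succeed as stated. The tightness examples ($C_7$ and the family $T_{k,\ell}$ of Figure~\ref{fig:coefficient}) show that equality $14|S| = w_G(N[S])-c_G(N[S])$ (or worse, a deficit: on $C_7$ a single vertex yields only $18-6=12<14$) can hold for \emph{every} bounded-radius choice of $S$, so no locally chosen vertex or bounded set produces the strict surplus needed to contradict minimality; a bounded-radius analysis cannot distinguish the putative counterexample from the tight examples. The paper's resolution is global: it classifies vertices into $A_i,B_j$ by their number of $2$-neighbors, extracts a cycle $C$ in $G[A_2\cup B_1]$ (Claim~\ref{clm:a2b1}), applies the Key Lemma to the \emph{unbounded} independent set $X\cup Y$ built along $C$, and then closes the resulting inequality~\eqref{eq:C} by a discharging argument (Claim~\ref{clm:final_ch}) that amortizes the boundary losses over the sets $X^*$ and $Y^*$. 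This global cycle-plus-discharging mechanism is the essential missing idea in your proposal; everything before it in your plan is sound, but it reduces the theorem to exactly the part you have deferred, and that part does not follow from a local argument of the kind you describe.
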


There are infinitely many connected graphs that are tight for the above result. 
Some simple ones are the edgeless graph, the $7$-cycle, and the right graph in Figure~\ref{fig:tight}. 
We now construct an infinite family of connected graphs that are tightness examples.
For non-negative integers $k$ and $\ell$ satisfying $k+\ell\ge 5$, let $T_{k,\ell}$ be the graph obtained from a $(k+4\ell)$-cycle $x_1\ldots x_{k+4\ell}x_1$ by adding a pendent vertex to each of $x_1, \ldots, x_k$, and adding a vertex joining $x_{k+4i+1}$ and $x_{k+4i+4}$ for each 
$i\in\{0,\ldots, \ell-1\}$ 
% $0 \le i \le \ell-1$ 
(see Figure~\ref{fig:coefficient}).
Then $n_0(T_{k,\ell}) = 0$, $n_1(T_{k,\ell}) = k$, $n_2(T_{k,\ell}) = 3\ell$, $n_3(T_{k,\ell}) = k+2\ell$, and $i(T_{k,\ell})=k+2\ell$.
Hence, equality in Theorem~\ref{mainthm:subcubic} holds.

\begin{figure}
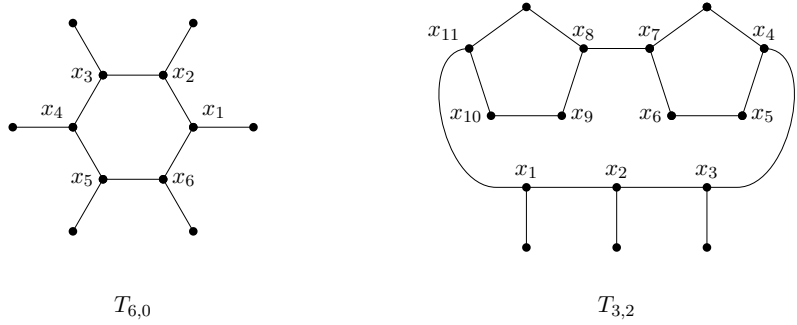

    \centering
    \includegraphics[scale=0.8, page=2]{fig_comb.pdf}     \qquad \qquad
    \includegraphics[scale=0.8, page=3]{fig_comb.pdf}
    \caption{The graphs $T_{6,0}$ and $T_{3,2}$}
    \label{fig:coefficient}
\end{figure}

\smallskip
Disproving a conjecture by Henning and Southey~\cite{southey2013domination}, O and West~\cite{suil2016cubic} constructed an infinite family of  $2$-connected cubic graphs $G$ satisfying $\frac{i(G)}{\gamma(G)}=\frac{5}{4}$. 
They asked if this is best possible. 
% proved that there are infinitely many $2$-connected cubic graphs, by constructing  a $2$-connected cubic graph $G$ with $14k$ vertices such that $i(G) = 5k$ and $\gamma(G) = 4k$ for $k\ge 1$, and then raised the following question, which is tight if it is true.

\begin{question}[\cite{suil2016cubic}]
Does $\frac{i(G)}{\gamma(G)} \le \frac{5}{4}$ hold for a connected cubic graph $G$ on sufficiently many vertices?
\end{question}

We answer this question in the affirmative for cubic graphs without $4$-cycles.

\begin{theorem}\label{thm:ratio}
If $G$ is a cubic graph without $4$-cycles, then $\frac{i(G)}{\gamma(G)} \le \frac{5}{4}$.
\end{theorem}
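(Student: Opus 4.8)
The plan is to fix a minimum dominating set $D$ of $G$, write $\gamma=\gamma(G)$ and $n=|V(G)|$, and construct from $D$ an independent dominating set of size at most $\frac54\gamma$. Since $G$ is cubic and $D$ dominates, $\bigcup_{v\in D}N[v]=V(G)$ with each $|N[v]|=4$, so $4\gamma\ge n$; I would set the ``excess'' $\epsilon=4\gamma-n\ge 0$, which measures how far $D$ is from being an efficient dominating set. The argument splits according to the size of $\gamma$ relative to $n$, with threshold $\gamma=\frac{2n}{7}$ (equivalently $\epsilon=\frac n7$) chosen so that the two regimes meet exactly at ratio $\frac54$.

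First I would dispose of the case $\gamma\ge\frac{2n}{7}$ using the main theorem: here $\frac54\gamma\ge\frac54\cdot\frac{2n}{7}=\frac{5n}{14}\ge i(G)$ by Theorem~\ref{mainthm:cubic}, so there is nothing more to do. All the work lies in the complementary ``near-efficient'' regime $\gamma<\frac{2n}{7}$, i.e.\ $\epsilon<\frac n7$.

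In that regime I would build the independent dominating set explicitly. Let $J$ be a \emph{maximum} independent set of $G[D]$ and put $K=D\setminus J$, so $|K|$ is the vertex cover number of $G[D]$ and hence $|K|\le e(G[D])$. Counting edges between $D$ and $V(G)\setminus D$ (each vertex outside $D$ has at least one neighbor in $D$) gives $3\gamma-2e(G[D])\ge n-\gamma$, that is $e(G[D])\le\frac{\epsilon}{2}$, so $|K|\le\frac{\epsilon}{2}$. Let $U=\{u\in V(G)\setminus D:\ N(u)\cap J=\varnothing\}$ be the set left undominated by $J$; every such $u$ has all its $D$-neighbors in $K$. Since $J$ is maximal, each $k\in K$ has a neighbor in $J$ and thus at most two neighbors outside $D$, so $|U|\le e(K,U)\le 2|K|$. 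Taking a minimum independent dominating set $I_U$ of $G[U]$ (hence $|I_U|\le|U|$) and setting $I=J\cup I_U$ gives an independent dominating set of $G$: independence holds because $U$ avoids $N(J)$, and domination holds because $J$ dominates $D\cup\big((V(G)\setminus D)\setminus U\big)$ while $I_U$ dominates $U$. Therefore
\[
i(G)\le |J|+|I_U|\le(\gamma-|K|)+2|K|=\gamma+|K|\le\gamma+\tfrac{\epsilon}{2}=3\gamma-\tfrac n2,
\]
and $3\gamma-\frac n2\le\frac54\gamma$ is precisely the hypothesis $\gamma\le\frac{2n}{7}$ of this case.

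The hard part will be getting the constants in the second case to line up: the bound $|U|\le 2|K|$ (forced by maximality of $J$ together with $\deg k=3$) and the excess bookkeeping $e(G[D])\le\frac{\epsilon}{2}$ are exactly what make the construction's overhead only $\frac{\epsilon}{2}$, so that the two regimes meet at $\frac54$ rather than a weaker ratio. Cubicity is used throughout (via $|N[v]|=4$ and $\deg k=3$), whereas the $4$-cycle-free hypothesis enters only through the invocation of Theorem~\ref{mainthm:cubic} in the first case.
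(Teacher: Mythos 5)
Your proposal is correct, and it takes a genuinely different route from the paper's, even though both share the same top-level balancing act: an ``$i(G)\le \gamma(G)+\text{overhead}$'' bound played off against Theorem~\ref{mainthm:cubic}, with the $4$-cycle-free hypothesis entering only through that theorem in both proofs. The paper obtains its overhead bound by choosing a minimum dominating set $D$ that additionally minimizes $e(G[D])$, proving $\Delta(G[D])\le 1$, and then running an inductive exchange argument (Lemma~\ref{clm:igd}, which swaps a vertex $v$ of degree one in $G[X]$ for a maximal independent subset of $\epn(v,X)$) to get $i(G)\le |D|+\frac{1}{2}n_1(G[D])$; its case split is on $m_0\ge m_1$ versus $m_0\le m_1$, with $n\le 4m_0+3m_1$ feeding into Theorem~\ref{mainthm:cubic} in the second case. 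You instead work with an arbitrary minimum dominating set: Gallai's identity makes $K=D\setminus J$ a minimum vertex cover of $G[D]$ with $|K|\le e(G[D])$, the degree-sum count gives $e(G[D])\le\frac{4\gamma-n}{2}$, and you repair the at most $2|K|$ undominated vertices in one shot with a maximal independent set of $G[U]$ --- no exchange induction and no edge-minimal preprocessing of $D$. All the details check out: $I=J\cup I_U$ is independent because $U$ avoids $N(J)$, it dominates because maximality of $J$ in $G[D]$ covers $K$, each $k\in K$ has at most two neighbors outside $D$ by cubicity, and the arithmetic $3\gamma-\frac{n}{2}\le\frac{5}{4}\gamma\iff\gamma\le\frac{2n}{7}$ meshes exactly with your first case. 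What your route buys: the intermediate bound $i(G)\le 3\gamma(G)-\frac{1}{2}|V(G)|$ holds for \emph{every} cubic graph, and the construction is direct rather than inductive. What the paper's route buys: the edge-minimized $D$ yields a case ($m_0\ge m_1$) resolved with no reference to $n$ at all, and the exchange lemma for near independent dominating sets is a reusable tool in its own right.
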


As all $2$-connected cubic graphs $G$ satisfying $\frac{i(G)}{\gamma(G)} = \frac{5}{4}$ provided in \cite{suil2016cubic}  have a $4$-cycle, we raise  the following question:

\begin{question}
Is there a cubic graph $G$ without $4$-cycles satisfying $\frac{i(G)}{\gamma(G)} = \frac{5}{4}$?
\end{question}

We prove Theorem~\ref{mainthm:subcubic} and Theorem~\ref{thm:ratio} in Section~\ref{sec:main} and Section~\ref{sec:ratio}, respectively.
We end this section with some definitions frequently used.

Given a graph $G$ and $X \subseteq V(G)$, let $G[X]$ denote the subgraph of $G$ induced by $X$, and let $G-X$  denote the subgraph of $G$ induced by $V(G) \setminus X$.  
For $X \subseteq V(G)$, let $\partial_G(X)$ be the set of edges whose one end is in $X$ and the other end is in $G-X$.
For a vertex $v \in V(G)$, let $d_G(v)$ denote the {\it degree} of $v$ in $G$.
For a non-negative integer $k$, a vertex $v$ is a {\it $k$-vertex} if $d_G(v)=k$, and a {\it $k$-neighbor} of $v$ is a neighbor of $v$ that is also a $k$-vertex. A {\it $2$-step neighbor} of $v$ is a vertex that can reach $v$ by a path of length $2$. 
A {\it $2$-step $k$-neighbor} of $v$ is a $2$-step neighbor that is a $k$-vertex. 
For a non-negative integer $k$, let $n_k(G)$ denote the number of $k$-vertices of $G$.
For each vertex $v \in V(G)$, let $N_G(v)$ denote the set of neighbors of $v$, and let 
$N_G[v]=N_G(v)\cup \{v\}$.
For  $X \subseteq V(G)$, let $N_G(X) = \bigcup_{v \in X} N_G(v)$ and $N_G[X] = N_G(X) \cup X$.
For $v \in V(G)$ and $X \subseteq V(G)$, an {\it $X$-vertex} is a vertex in $X$, and an {\it $X$-neighbor} of $v$ is a neighbor of $v$ that is in $X$. 
Throughout the paper, if it is clear which graph we are referring to, then we will simply use $d(v)$, $N(v)$, $N[v]$, $N[X]$, and $\partial(X)$,
instead of $d_G(v)$, $N_G(v)$, $N_G[v]$, $N_G[X]$, and $\partial_G(X)$, respectively. 
For $\{v_1, \ldots, v_k\}\subseteq V(G)$, we simply write $N[v_1,  \ldots, v_k]$ instead of $N[\{v_1,\ldots,v_k\}]$.

\section{Ratio of independent domination and number of vertices}\label{sec:main}
% \section{Proof of Theorem~\ref{mainthm:subcubic}}\label{sec:main}

Given a vertex $v$ of a subcubic graph $G$, its {\it weight} $w_G(v)$ is defined as follows:
\begin{eqnarray*}
w_G(v) = \begin{cases}14 &\text{ if } d(v) = 0 \\
9 &\text{ if } d(v) = 1\\
6 &\text{ if } d(v) = 2\\
5 &\text{ if } d(v) = 3\\
\end{cases}
\end{eqnarray*}
The {\it weight} of a non-empty set $X\subseteq V(G)$, denoted $w_G(X)$, is defined by $w_G(X) = \displaystyle\sum_{v \in X} w_G(v)$, and 
the {\it weight} of the entire graph $G$, denoted $w(G)$, is the weight of $V(G)$.
That is,
\[w(G) = w(V(G))=\displaystyle\sum_{v \in V(G)}w_G(v) = 14n_0(G) + 9n_1(G) + 6n_2(G) +5n_3(G).\]
For a non-empty subset $X$ of $V(G)$,
we denote the sum of weight changes of the vertices in $G-X$ by $c_G(X)$.
That is,
\begin{eqnarray*}
c_G(X) = \sum_{v \in V(G-X)} (w_{G-X}(v)-w_G(v)) =  w(G-X) -( w(G) - w_G(X)).
\end{eqnarray*}

To show Theorem~\ref{mainthm:subcubic}, suppose to the contrary that there exists a counterexample, and let $G$ be a counterexample with the minimum number of vertices and edges. 
This implies that 
$G$ is a subcubic graph without $4$-cycles satisfying $14i(G) > w(G)$, and every proper subgraph $H$ of $G$ satisfies $14i(H) \le w(H)$.

If $G$ is the disjoint union of two graphs $G_1$ and $G_2$, then by the minimality of $G$,  
\begin{eqnarray*}
&& 14i(G) = 14i(G_1)+14i(G_2) \le w(G_1)+w(G_2) = w(G),
\end{eqnarray*}
which is a contradiction.
Thus, $G$ is connected. 
One can check that the theorem holds for all graphs with at most four vertices, and all paths and cycles on at least five vertices. 
Thus, $G$ is a graph on at least five vertices and has a $3$-vertex.

We will often use the following observation. 
For a vertex $v$, a vertex of $G-N[v]$ cannot be adjacent to two vertices in $N[v]$ since $G$ has no $4$-cycles. 
By the same reason, two vertices cannot have two common neighbors. 
We now prove a key lemma that will be crucial in our future arguments. 

\begin{keylemma}\label{fact}
If $S$ is an independent set of $G$, then $14|S| > w_G(N[S])-c_G(N[S])$.
\end{keylemma}

\begin{proof}
 Let $U=N[S]$, and let $S'$ be a minimum independent dominating set of $G-U$. By the minimality of $G$, we have $w(G-U) \ge 14i(G-U) = 14 |S'|$.
Since $S$ is an independent dominating set of $G[U]$ and $S\cup S'$ is an independent set, $S\cup S'$ is an independent dominating set of $G$. 
Since
\begin{eqnarray*}
14|S| + 14|S'| \ge 14i(G) > w(G) = w(G-U)+w_G(U)-c_G(U) \ge 14|S'|+w_G(U)-c_G(U), 
\end{eqnarray*}
 we obtain the desired conclusion.
\end{proof}

Our first goal is to prove that $G$ has no $1$-vetices. 
We first lay out some structural observations.

\begin{claim}\label{clm:vcond}
The following do not appear in the graph $G$:
\begin{enumerate}[\rm(i)]
\item A vertex with two $1$-neighbors.
\item A vertex with only $2^-$-neighbors.
\item A $3$-vertex with a $1$-neighbor and a $2$-neighbor.
\item A triangle $vuwv$ such that $v$ has a $1$-neighbor.
\item A triangle with a $2$-vertex. 
\item A path $vuw$ such that $v,u,w$ are $3$-vertices, $v$ has a $1$-neighbor and $u$ has no $1$-neighbor.
\end{enumerate}
\end{claim}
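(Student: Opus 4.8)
The plan is to prove each of the six forbidden configurations by the same discharging-free contradiction scheme: assume the configuration appears in the minimal counterexample $G$, build a small independent set $S$ localized at the configuration, and then apply the \kl~to derive $14|S| > w_G(N[S]) - c_G(N[S])$, which I will contradict by showing the right-hand side is actually at least $14|S|$. So the real content of each item is a careful accounting of two quantities: the weight $w_G(N[S])$ of the closed neighborhood that $S$ dominates, and the weight-change term $c_G(N[S])$, which measures how much the degrees (hence weights) of vertices just outside $N[S]$ drop when $N[S]$ is deleted. Since weights increase as degree decreases (a vertex going from degree $3$ to degree $2$ gains $1$, degree $2$ to degree $1$ gains $3$, degree $1$ to degree $0$ gains $5$), the term $c_G(N[S])$ is always nonnegative, so subtracting it only helps me; the heart of each argument is thus to lower-bound $w_G(N[S])$ by $14|S|$ directly, possibly using a little help from $c_G(N[S])$ when the neighborhood weight alone falls short.

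Concretely, for a single well-chosen vertex $v$ I will take $S=\{v\}$, so $|S|=1$ and I need $w_G(N[v]) - c_G(N[v]) \ge 14$. For item~(ii), if $v$ has only $2^-$-neighbors then $v$ together with its neighbors already carries weight at least $5 + 6 + 6 = 17$ (for a $3$-vertex with two $2$-neighbors, using $w_G(v)\ge 5$ and each neighbor contributing at least $6$), comfortably exceeding $14$; I just need to check the cases where $v$ itself is a $1$- or $2$-vertex and verify the $4$-cycle-free hypothesis keeps the relevant neighbors distinct. Item~(i), a vertex with two $1$-neighbors, is similar: taking $S=\{v\}$ gives $w_G(N[v]) \ge 5 + 9 + 9 = 23$. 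Item~(iii) is the analogous count with neighbor weights $9 + 6$ plus the third neighbor. The triangle items~(iv) and~(v) will use the same single-vertex $S$ but now I must be careful that the closed neighborhood of $v$ contains the two other triangle vertices, and I exploit the extra weight these guarantee; the absence of $4$-cycles ensures no degeneracies in $N[v]$.

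The genuinely delicate items are~(v) and~(vi), where a naive $w_G(N[v])$ bound can fall exactly short of $14$, and I will need the $c_G$ term or a two-vertex set $S$. For item~(vi), the path $vuw$ of $3$-vertices with a pendant $1$-neighbor at $v$, the right choice is likely $S$ containing both the $1$-neighbor of $v$ and some vertex near $w$, or else $S=\{v\}$ supplemented by the observation that deleting $N[v]$ forces a degree drop at $u$'s or $w$'s other neighbors, contributing positively to $c_G(N[v])$. The main obstacle throughout is bookkeeping: I must verify that the vertices I am counting in $N[S]$ are genuinely distinct, which is exactly where the no-$4$-cycle hypothesis is invoked (no two vertices share two neighbors, and no vertex outside $N[v]$ sees two vertices inside it), and I must avoid double-counting weight-change contributions in $c_G$. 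My plan is to handle~(i)--(iv) quickly with single-vertex sets and clean weight counts, then invest the care in~(v) and~(vi), selecting $S$ so that the dominated region is as heavy as possible and, where the margin is tight, extracting the extra unit of weight from the forced degree reductions recorded by $c_G(N[S])$.
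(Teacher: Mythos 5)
There is a genuine gap, and it is conceptual rather than a matter of bookkeeping: you have the role of $c_G(N[S])$ exactly inverted. The \kl{} states $14|S| > w_G(N[S]) - c_G(N[S])$, so to contradict it you must establish $w_G(N[S]) - c_G(N[S]) \ge 14|S|$; since $c_G(N[S]) \ge 0$ is \emph{subtracted}, it works against you, and your statements that ``subtracting it only helps me'' and that you will extract ``a little help from $c_G(N[S])$ when the neighborhood weight alone falls short'' are backwards. The term $c_G$ records the weight \emph{gained} by vertices outside $N[S]$ when their degrees drop upon deleting $N[S]$, and the entire technical content of the paper's proof of this claim is a collection of \emph{upper} bounds on $c_G$ --- e.g.\ $c_G(N[v]) \le 5t_2 + 8t_3$, where item (i) is invoked to guarantee each $3$-neighbor of $v$ has at most one $1$-neighbor, and the no-$4$-cycle hypothesis guarantees each outside vertex loses at most one edge. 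Lower-bounding $w_G(N[S])$ by $14|S|$ alone proves nothing, and your plan as stated would not close even item (i).

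The inversion is fatal in practice because the margins are exactly zero, not ``comfortable.'' In item (ii) with $v$ a $2$-vertex, $w_G(N[v]) = 18$ and one needs $c_G(N[v]) \le 4$; this requires first proving (by an induction-like ordering on violators of minimum degree) that no $1$-vertex has a $2^-$-neighbor, so that all $2$-step neighbors are $2^+$-vertices, and then choosing $v$ so that some $2$-step neighbor is a $3$-vertex, which uses the global fact that $G$ is not a cycle --- an extremal choice your purely local scheme does not supply. (Your count ``$5+6+6=17$'' also forgets the third neighbor of a $3$-vertex, but that is minor.) Similarly, item (iii) gives $25 - 11 = 14$ exactly, with $c_G \le 11$ resting on items (i) and (ii); item (i) itself requires handling the degenerate case where $G - N[v]$ has two isolated vertices, whence $G$ is a $6$-vertex tree verified directly rather than via the \kl. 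The sequential dependence --- (ii) on (i); (iii) on (i),(ii); (v) on (ii),(iv); (vi) on (i),(iii),(iv), with $c_G(N[v]) \le 1+3+\max\{5+1,3+3\}=10$ against $w_G(N[v]) \ge 24$ --- is the actual engine of the proof, and none of it survives once $c_G$ is treated as a bonus rather than a cost. For what it is worth, your guess that (vi) needs a two-vertex set $S$ is also unnecessary: the paper handles it with $S=\{v\}$, precisely because item (iii) caps the damage term.
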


\begin{proof}
 In the following, let $v$ be a vertex, and let $t_i$ be the number of $i$-neighbors of $v$, so $t_1+t_2+t_3=d(v)$. Let $U=N[v]$, so $w_G(U)=9t_1+6t_2+5t_3+w_G(v)$.

(i): Suppose to the contrary that $t_1\ge 2$. 
Then $w_G(U)\ge 9 \cdot 2 + \min\{6, 5+5, 5+6\} =24$ and $|\partial(U)|\le 2$.
If $G-U$ has two isolated vertices, then $G$ is a tree on $6$ vertices, so $14i(G) = 14 \cdot 3 =42 \le 46 = 9 \cdot 4 + 5 \cdot 2 =w(G)$, which is a contradiction.
Thus $G-U$ has at most one isolated vertex, so $c_G(U)\le 5+3 =8$.
By the {\kl}, $14>24-8$, which is a contradiction.
Hence (i) holds. 

Actually, by the same logic, we know that
$c_G(U)\le 5t_2+ (5+3)t_3=5t_2+8t_3$.
Note that every $3$-neighbor of $v$ has at most one $1$-neighbor by $(i)$. 
By the {\kl},
\[14 > 9t_1+6t_2+5t_3 + w_G(v) - (5t_2+ 8t_3) = 9t_1+t_2-3t_3 + w_G(v).\]

(ii):  Suppose to the contrary that $t_3=0$, and assume $v$ is such a vertex with minimum degree.
If $t_1\ge 1$, then by the {\kl}, $14>9t_1+t_2+w_G(v)\ge 9 + 5$, which is a contradiction, so $t_1=0$.
Therefore, all neighbors of $v$ are $2$-vertices. 

Suppose that $v$ is a $1$-vertex, and let $u$ be the neighbor of $v$. 
Since $G$ is a graph on at least $5$ vertices,
the neighbor $w$ of $u$ that is not $v$ must be a $2^+$-vertex.  
Since $w_G(U) = 9+6=15$, and $c_G(U)\in\{1,3\}$, the {\kl} implies $c_G(U)= 3$, so $w$ must be a $2$-vertex. 
Now, we know $w_G(N[u])=9+6\cdot 2=21$ and $c_G(N[u])\le 5$.
The {\kl} implies $14>21-5$, which is a contradiction.
In particular, there is no $1$-vertex with a $2^-$-neighbor. 

Now suppose that $d(v)\ge 2$.
Since there is no $1$-vertex with a $2^-$-neighbor, 
the $2$-step neighbors of $v$ are  $2^+$-vertices. 
 
If $v$ is a $2$-vertex, then $w_G(U) = 6 \cdot 3 = 18$.
Since $G$ is not a cycle, we can take $v$ to be a vertex such that at least one $2$-step neighbor of $v$ is a $3$-vertex. 
Since $G$ has no $4$-cycle, $c_G(U) \le 3+1 =4$, so the {\kl} implies $14>18-4 $, which is a contradiction.
If $v$ is a $3$-vertex, then $w_G(U)=6 \cdot 3 +5=23$ and $c_G(U)\le 3 \cdot 3= 9$, so the {\kl} implies $14>23-9 $, which is a contradiction.
Hence (ii) holds.

(iii): Suppose to the contrary that $v$ is a $3$-vertex, $t_1\ge 1$, and $t_2\ge 1$.
Then $t_1=t_2=t_3=1$ by (ii), and therefore  $w_G(U)=9 +6+5\cdot 2=25$. 
By (ii), the $2$-neighbor of $v$ has no $1$-neighbor and by (i), the $3$-neighbor of $v$ has at most one $1$-neighbor, so $c_G(U)\le 5+ 3 \cdot 2=11$. 
The {\kl} implies $14>25-11$, which is a contradiction.
Hence (iii) holds.

(iv):  Suppose to the contrary that there is a triangle $vuwv$ such that $v$ has a $1$-neighbor. Then $w_G(U)\ge 9+5\cdot 3=24$ and $c_G(U)\le 5\cdot 2=10$. The {\kl} implies $14>24-10$, which is a contradiction.
Hence (iv) holds.

(v): Suppose to the contrary that there is a triangle $vuwv$ such that $v$ is a $2$-vertex.
If $u$ is a $2$-vertex, then $w$ must be a $3$-vertex since $G$ is a graph on at least $5$ vertices. 
By (ii) and (iv), the neighbor of $w$ not on $vuwv$ must be a $3$-vertex, so $w_G(U) \ge 6 \cdot 2 + 5 = 17$ and $c_G(U) \le 1$.
The {\kl} implies $14>17-1$, which is a contradiction.
Thus, $u$ is a $3$-vertex, and by symmetry, $w$ is also a $3$-vertex, so $w_G(U) = 6 + 5 \cdot 2 = 16$.
Let $u',w'$ be the neighbor of $u,w$, respectively, that is not on the triangle $vuwv$. 
Note that $u'\neq w'$ since $G$ has no $4$-cycles.
By (iv), both $u'$ and $w'$ are $2^+$-vertices.
If $u', w'$ are $3$-vertices, then $c_G(U) \le 1 \cdot 2 = 2$,  so the {\kl} implies $14>16-2$, which is a contradiction.
Without loss of generality, let $u'$ be a $2$-vertex.
Note that $u'$ has no $1$-neighbor by (ii).
Now, $w_G(N[u]) \ge 6 \cdot 2 + 5 \cdot 2 = 22$ and $c_G(N[u]) \le 3 \cdot 2 = 6$. The {\kl} implies $14>22-6$, which is a contradiction.
Hence (v) holds. 

(vi): Suppose to the contrary that there is a path $vuw$ such that $v, u, w$ are $3$-vertices, $v$ has a $1$-neighbor, and $u$ has no $1$-neighbor.
By (i) and (iii), $v$ has a $3$-neighbor $x$ other than $u$. Note that $x\neq w$ by (iv). 
By (iii), $c_G(U) \le 1+3+\max\{5 + 1, 3+3\} = 10$. 
Since $w_G(U)\ge 9 + 5 \cdot 3 = 24$, the {\kl} implies  $14>24-10$, which is a contradiction.
Hence (vi) holds. 
\end{proof}

\begin{claim}\label{claim:n1}
The graph $G$ has no $1$-vertex. 
\end{claim}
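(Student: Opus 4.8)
The plan is to suppose that $G$ has a $1$-vertex $a$, let $v$ be its neighbor, and first pin down the forced local picture using Claim~\ref{clm:vcond}. Since a $1$-vertex cannot have a $2^-$-neighbor (the sub-statement established in the proof of (ii)), $v$ is a $3$-vertex. By (i) and (iii), the two remaining neighbors $u,x$ of $v$ are then $3$-vertices, and by (iv) they are nonadjacent (no triangle through $v$). Next I would classify each $3$-neighbor of $v$ by invoking (vi): a $3$-neighbor $u$ of $v$ either \emph{(type A)} has a $1$-neighbor, in which case by (i) and (iii) it has exactly one $1$-neighbor and one further $3$-neighbor and no $2$-neighbor; or \emph{(type B)} has no $1$-neighbor, in which case (vi) forces its other two neighbors to be $2$-vertices. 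A quick computation with the \kl\ applied to $S=\{v\}$ shows $w_G(N[v])=24$ while the change term $c_G(N[v])$ equals exactly $12$ in either type, giving $14>24-12=12$: the single-vertex configuration satisfies the bound with a slack of $2$, so no $S$ localized at one $1$-vertex can yield a contradiction. This matches the extremal trees $T_{k,0}$, which carry $1$-vertices and meet the bound with equality; hence the argument must use the strictness of the counterexample globally.

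The key structural step is to apply the \kl\ to the set $S$ of \emph{all} $1$-vertices at once. This $S$ is independent: two $1$-vertices are nonadjacent (else they form a $K_2$ component, impossible since $G$ is connected on at least five vertices), and by (i) distinct $1$-vertices have distinct neighbors. Each such neighbor is a $3$-vertex with (by (i), (iii)) exactly two further $3$-neighbors, so $w_G(N[S])=14m$ where $m=|S|$, and the \kl\ reads $14m>14m-c_G(N[S])$, i.e.\ $c_G(N[S])>0$. I would then identify the leak $c_G(N[S])$ precisely: an edge between two ``special'' $3$-vertices (those carrying a $1$-neighbor) is internal, while an edge from a special vertex to a type-B $3$-vertex contributes exactly $+1$; moreover (vi) forbids a type-B vertex from being adjacent to two special vertices, so there is no double counting. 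Thus $c_G(N[S])>0$ means that some special vertex $v$ has a type-B $3$-neighbor $z$, whose remaining two neighbors $z_1,z_2$ are $2$-vertices.

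The remaining work is a case analysis on this type-B neighborhood. With the $2$-vertices $z_1,z_2$ located, I would examine their other endpoints $z_1',z_2'$ (which, by (ii) and (v), are $2$- or $3$-vertices and are not adjacent to $z$), together with the second neighbor $x$ of $v$ and its type. The goal is to assemble an augmented independent set $S'$ — built from $a$, the heavy $2$-vertices, and suitable $2$-step neighbors — whose closed neighborhoods overlap enough (sharing an internal edge or a common boundary vertex, exploiting the absence of $4$-cycles) that $w_G(N[S'])-c_G(N[S'])\ge 14|S'|$, contradicting the \kl. Here the no-$4$-cycle hypothesis is used repeatedly to guarantee that the various neighbors are distinct and that each boundary vertex loses only one incident edge.

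I expect the main obstacle to be exactly the tightness recorded above: every configuration anchored at a single $1$-vertex falls short of a contradiction by precisely $2$ units, in line with the extremal trees, so the proof cannot proceed by a uniform local estimate. Recovering those $2$ units requires combining the global positivity $c_G(N[S])>0$ with a delicate, case-driven analysis of how the type-B $2$-vertices attach — in particular the degrees of $z_1',z_2'$, whether neighborhoods coincide, and which type the companion neighbor $x$ has. Managing this case split while keeping track of the change term $c_G$ for each candidate $S'$ is where the real effort lies.
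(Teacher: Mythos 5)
Your opening reduction is correct, and in one respect cleaner than the paper's. The local structure you extract from Claim~\ref{clm:vcond} is right (the neighbor of a $1$-vertex is a $3$-vertex whose other two neighbors are $3$-vertices, each of ``type A'' or ``type B'' with the neighborhoods you describe), your computation that the single-vertex Key Lemma application gives only $14>24-12$ is accurate, and applying the Key Lemma to the set $S$ of \emph{all} $1$-vertices does yield $c_G(N[S])>0$, hence a $3$-vertex $v$ with a $1$-neighbor that also has a type-B $3$-neighbor $z$ whose remaining neighbors are $2$-vertices. The paper reaches the same configuration differently: it considers components of $G[X]$, where $X$ is the set of $3$-vertices with $1$-neighbors, disposes of cycle components by recognizing $G\cong T_{k,0}$ (an equality case, contradicting strictness), and takes an endpoint $u_1$ of a path component with its neighbor $x\notin X$. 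Your global Key Lemma step absorbs the cycle case implicitly, which is a genuinely nice shortcut to the same starting point.

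The gap is everything after that point: the case analysis you announce is never carried out, and it is the actual content of the claim's proof --- in the paper it spans Cases 1, 2, 2-1, and 2-2 and is by far the hardest part. More importantly, your announced toolkit (assemble an independent set $S'$ and beat the Key Lemma) is demonstrably insufficient in at least two of those cases, where the paper does not use the Key Lemma at all but instead performs surgery on the minimal counterexample: in Case 1 it passes to $G'=(G-\{u_1,u_2,v_1,v_2\})+xy$ (legal only when $u_1$ lies on no $6$-cycle, which itself forces a subcase), and in Case 2-1, where the $2$-vertex pairs close into $5$-cycles via $x_1'x_2',y_1'y_2'\in E(G)$, it passes to $G'=(G-\{u_1,v_1\})+xu_2$, each time checking no $4$-cycle is created and explicitly rerouting a minimum independent dominating set of $G'$ into one of $G$ by vertex swaps. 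This edge-addition mechanism is what recovers the $2$-unit deficit you correctly identified; the Case 2-1 configuration is locally a piece of the tight graphs $T_{k,\ell}$, so no local weight count of the form $w_G(N[S'])-c_G(N[S'])$ can refute it, and your sketch offers no substitute. Even the cases that \emph{are} closed by the Key Lemma require first forcing additional structure (the $6$-cycle in Case 1, the edge $x_1''y_2$ via an isolated-vertex argument and then $zy_1''$ in Case 2-2) before the right multi-vertex sets such as $N[x,v_1,v_2,y]$, $N[x_1,v_1,u_2]$, and $N[u_1,x_1,y_2,y_1',z]$ become available, none of which your plan anticipates. So: correct and somewhat novel reduction, but the heart of the proof is missing, and the declared strategy would fail as stated in the surgery cases.
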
   
\begin{proof}                                  
Suppose to the contrary that $G$ has a $1$-vertex. 
Let $X$ be the set of $3$-vertices with $1$-neighbors.
Note that $G[X]$ has maximum degree at most $2$ and $X$ is not empty since every $1$-vertex is  adjacent to a $3$-vertex by Claim~\ref{clm:vcond}~(ii).
Take a component $P$ of $G[X]$.
If $P$ is a cycle, then $G$ is isomorphic to $T_{k,0}$ for some $k \ge 3$ (see~Figure~\ref{fig:coefficient}), which is a contradiction to our assumption that $14i(G) > w(G)$. Thus $P$ is a path, and let $u_1$ be an end vertex of $P$. 
Let $N(u_1)=\{v_1,u_2,x\}$, where
$v_1$ is the $1$-neighbor of $u_1$ and  $x\not\in X$.
By Claim~\ref{clm:vcond}~(i),~(iii), and~(iv), $x$ and $u_2$ are $3$-vertices and $xu_2\not\in E(G)$.
Let $N(x)=\{u_1,x_1,x_2\}$.
We know $x_1$ and $x_2$ are $2$-vertices by Claim~\ref{clm:vcond}~(vi).

\smallskip

\begin{figure}
    \centering
    \includegraphics[width=18cm, page=4]{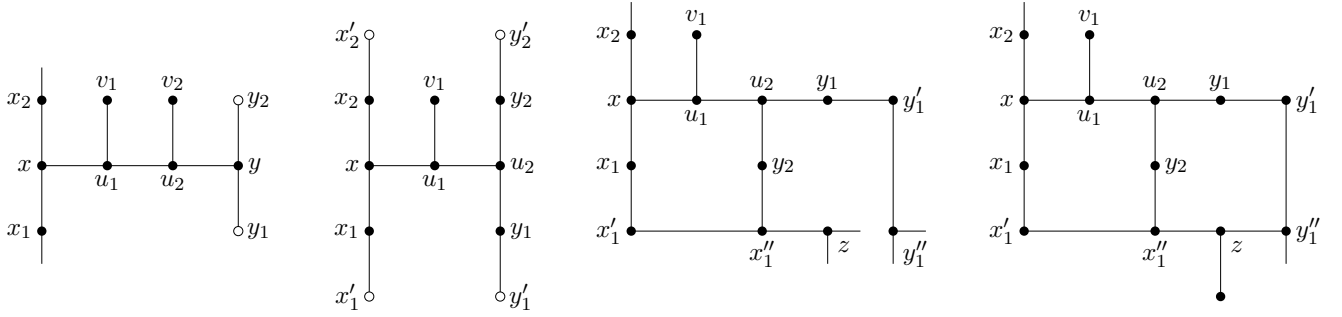}
    \caption{Illustrations for the proof of Claim~\ref{claim:n1}}
    \label{fig:n1}
\end{figure}

\noindent{\bf(Case 1)} Suppose that $u_2$ is on $P$. 
Let $N(u_2)=\{v_2,y,u_1\}$, where $v_2$ is the $1$-neighbor of $u_2$. 
By Claim~\ref{clm:vcond}~(i),~(iii), and~(iv), $y$ is a $3$-vertex distinct from $x$.
Let $N(y)=\{u_2,y_1,y_2\}$.
Note that $xy \notin E(G)$ since $G$ has no $4$-cycles.
See the first figure of Figure~\ref{fig:n1} for an illustration.
Suppose that $u_1$ is not on a $6$-cycle.
Consider $G' = (G-\{u_1,u_2,v_1,v_2\})+xy$.
Since $G'$ is a subcubic graph without $4$-cycles, by the minimality of $G$, $14i(G') \le w(G')$.
We can always add two vertices in $\{u_1,u_2,v_1,v_2\}$ to a minimum independent dominating set of $G'$ to obtain an independent dominating set of $G$, so $i(G) \le i(G') + 2$.
Since $w(G) = w(G') + 28$, we obtain $14i(G) \le 14i(G') + 28 \le w(G') + 28 = w(G)$, which is a contradiction.
Thus $u_1$ must be on a $6$-cycle, say $xu_1u_2yy_1x_1x$. 

Suppose $y \in X$, and assume $y_2$ is a $1$-neighbor of $y$. 
Then, $y_1$ is a $3$-vertex by Claim~\ref{clm:vcond}~(i) and~(iii).
Since $x_1$ is a 2-neighbor of $y_1$, Claim~\ref{clm:vcond}~(iii) implies $y_1 \notin X$.
Now, $w_G(N[x,v_1,v_2,y]) = 9\cdot 3+ 6 \cdot 2 + 5\cdot5 = 64$ and $c_G(N[x,v_1,v_2,y]) \le \max\{3 +3, 8\} = 8$, so the {\kl} implies $56=4\cdot 14 > 64-8 = 56$, which is a contradiction.
Thus $y \notin X$. 
By Claim~\ref{clm:vcond}~(vi), $y_1$ and $y_2$ are $2$-vertices. 
Now, $w_G(N[x_1,v_1,u_2]) = 9 \cdot 2 + 6 \cdot 2 + 5 \cdot 4 = 50$ and $c_G(N[x_1,v_1,u_2]) \le \max\{3+3,8\}=8$, so the {\kl} implies $42=3 \cdot 14 > 50-8=42$, which is a contradiction.

\smallskip

\noindent{\bf(Case 2)} 
Suppose that $u_2$ is not on $P$. 
Let $N(u_2)=\{u_1,y_1,y_2\}$.
By Claim~\ref{clm:vcond}~(vi),  $y_1$ and $y_2$ are $2$-vertices. 
For $i\in\{1, 2\}$, let $x'_i$ and $y'_i$ be the neighbor of $x_i$ and $y_i$ distinct from $x$ and $u_2$, respectively. 
See the second figure of Figure~\ref{fig:n1} for an illustration.
Since $G$ has no $4$-cycles, $x'_1,x'_2,u_2$ are all distinct.
If $x'_1$ and $x'_2$ are $3$-vertices, then $w_G(N[x,v_1]) = 9 + 6 \cdot 2 + 5 \cdot 2 = 31$ and $c_G(N[x,v_1]) \le 1 \cdot 3 = 3$, so the {\kl} implies $28=2\cdot 14 >31-3=28$, which is a contradiction.
Without loss of generality, we may assume $x'_1$ is a $2$-vertex. 
By symmetry, we may assume $y'_1$ is a $2$-vertex. 

\smallskip

\noindent\textbf{(Case 2-1)} Suppose that $x'_1x'_2 \in E(G)$ and $y'_1y'_2\in E(G)$.
Note that $x,u_1,u_2$ cannot all be on the same $5$-cycle.
Thus $G' = (G -\{u_1,v_1\})+xu_2$ has no $4$-cycles.
By the minimality of $G$, $14i(G') \le w(G')$.
Let $S'$ be a minimum independent dominating set of $G'$.
We will define an independent dominating set $S$ of $G$ of size $|S'|+1$. 
If $S'$ does not contain a vertex in $\{x,u_2\}$, then let $S=S' \cup \{v_1\}$.
If $x\in S'$, then exactly one of $x'_1, x'_2$ is in $S'$, so when $x'_1\in S'$ (resp. $x'_2\in S'$),  let $S=(S'\cup\{x_2, u_1\})\setminus\{x\}$ (resp. $S=(S'\cup\{x_1, u_1\})\setminus\{x\}$).
By symmetry, the case when $u_2\in S$ is also resolved. 
In every case, $i(G) \le i(G') +1$.
Since $w(G) = w(G') + 14$, we have $14i(G) \le 14i(G')+14 \le w(G') + 14 = w(G)$, which is a contradiction.

\smallskip

\noindent \textbf{(Case 2-2)} 
Without loss of generality, suppose that $x'_1x'_2 \not\in E(G)$.
Let $x''_1$ be the neighbor of $x'_1$ distinct from $x_1$.
By Claim~\ref{clm:vcond}~(v), we know $x''_1 \neq x$.
By Claim~\ref{clm:vcond}~(ii), $x''_1$ is a $3$-vertex with a $3$-neighbor. 
If $x''_1 = u_2$, then $w_G(N[u_1, x'_1]) = 9 + 6 \cdot 2 + 5 \cdot 3 = 36$ and $c_G(N[u_1, x'_1]) \le 3 \cdot 2 = 6$, so the {\kl} implies $28=2 \cdot 14 > 36-6=30$, which is a contradiction.
Thus $x''_1 \neq u_2$, and 
$w_G(N[u_1, x'_1]) = 9 + 6 \cdot 2 + 5 \cdot 4 = 41$.
If $G-N[u_1, x'_1]$ has no isolated vertices, then $c_G(N[u_1, x'_1]) \le 3 \cdot 4 + 1 = 13$, so the {\kl} implies $28=2 \cdot 14 > 41-13=28$, which is a contradiction.
Thus, $G-N[u_1, x'_1]$ must have an isolated vertex.
Since $y'_1$ is a $2$-vertex and $x'_1x'_2 \not\in E(G)$, we know $x''_1y_2 \in E(G)$.

Let $z$ be the neighbor of $x''_1$ distinct from $x'_1$ and $y_2$, so $z$ is a $3$-vertex. 
Let $y''_1$ be the neighbor of $y'_1$ other than $y_1$, so by Claim~\ref{clm:vcond}~(ii), $y''_1$ is a $3$-vertex. 
See the third figure of Figure~\ref{fig:n1} for an illustration.
Since $w_G(N[x''_1]) = 6 \cdot 2 + 5 \cdot 2 = 22$, the {\kl} implies $c_G(N[x''_1]) \ge 9$. 
Thus, together with Claim~\ref{clm:vcond}~(i) and~(iii), we know that the neighbors of $z$ other than $x''_1$ are either both $2$-vertices or a $1$-vertex and a $3$-vertex.
Since $w_G(N[x''_1,y_1])=6 \cdot 4 + 5 \cdot 3 = 39$, by the {\kl}, $28 = 2 \cdot 14 > 39 - c_G(N[x''_1,y_1])$, so  $c_G(N[x''_1,y_1]) \geq 12$.
If $z = y''_1$ or $zy''_1 \not\in E(G)$, then $c_G(N[x''_1,y_1]) \le 3+1\cdot2+\max\{3+3,1+5\}=11$, which is a contradiction.
Thus $zy''_1\in E(G)$, and moreover, $z$ has a $1$-neighbor.
See the last figure of Figure~\ref{fig:n1} for an illustration.
 
Now $w_G(N[u_1,x_1,y_2,y'_1,z]) = 9 \cdot 2 + 6 \cdot 5 + 5 \cdot 6 = 78$.
Note that $y''_1$ has no $1$-neighbor by Claim~\ref{clm:vcond}~(iii).
So $c_G(N[u_1,x_1,y_2,y'_1,z]) \le \max\{3+3,8\} = 8$.
By the {\kl}, $70 = 5 \cdot 14 > 78 - 8$, which is a contradiction.
\end{proof}

We will now analyze the structure between $2$-vertices and $3$-vertices. 
In the following, let $A_i$ be the set of $3$-vertices with exactly $i$ $2$-neighbors, and let $B_i$ be the set of $2$-vertices with exactly $i$ $2$-neighbors. 
By Claim~\ref{clm:vcond}~(ii),  $A_3=B_2=\emptyset$, so $V(G)$ can be partitioned into $A_0, A_1, A_2, B_0$, and $B_1$.
Note that $B_0$ is an independent set in $G$ and $G[B_1]$ is the disjoint union of $K_2$.

\begin{claim}\label{clm:one3}
The following holds in the graph $G$:
\begin{enumerate}[\rm(i)]
    \item Each $B_1$-vertex $v$ has at least one $2$-step $2$-neighbor.
    In particular, $v$ has a $B_1$-neighbor and an $A_2$-neighbor.
    \item Each $A_i$-vertex $v$ has at least $i+1$ $2$-step $2$-neighbors.
    \begin{enumerate}[\rm({ii})-1]
        \item If $i=2$, then the three neighbors of $v$ are in $A_2, B_1, B_1$, or $A_1, B_1, B_1$, or $A_2, B_0, B_1$. 
        \item If $i=0$, then $v$ has an $A_1$-neighbor, which has an $A_2$-neighbor.
    \end{enumerate}

\end{enumerate}
\end{claim}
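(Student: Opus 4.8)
The plan is to extract everything from a single, uniform application of the \kl{} to singletons $S=\{v\}$, and then to bootstrap the finer statements from the resulting coarse count. For a $3$-vertex $v$ with exactly $i$ neighbors of degree $2$ (so $v\in A_i$), one has $w_G(N[v])=5+6i+5(3-i)=20+i$, so the \kl{} gives $c_G(N[v])\ge 7+i$. The key observation is that, since $G$ has no $4$-cycles and (by Claim~\ref{clm:vcond}) no triangle contains a $2$-vertex, when $v$ lies in no triangle every edge of $\partial(N[v])$ reaches a \emph{distinct} vertex of $G-N[v]$ that is adjacent to exactly one vertex of $N[v]$; hence no such vertex becomes isolated, a $3$-vertex target contributes $+1$ to $c_G(N[v])$, and a $2$-vertex target contributes $+3$. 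Writing $m$ for the number of $2$-vertex targets, which are precisely the $2$-step $2$-neighbors of $v$, and noting $|\partial(N[v])|=6-i$, we obtain $c_G(N[v])=2m+(6-i)$; combined with $c_G(N[v])\ge 7+i$ this forces $m\ge i+1$, which is exactly part~(ii). The analogous bookkeeping with $w_G(N[v])=17$ yields the count in part~(i) for a $B_1$-vertex.

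The triangle cases are possible only for $v\in A_0$ or $v\in A_1$, since a triangle avoids $2$-vertices and so requires $v$ to have two adjacent $3$-neighbors; moreover the no-common-neighbor rule forbids two triangles at $v$, so there is at most one. A triangle merely removes edges from $\partial(N[v])$, and I would simply recompute $|\partial(N[v])|$ and re-run the same weight inequality: with fewer external edges the \kl{} forces a higher proportion of the targets to be $2$-vertices, and in each case one still gets $m\ge i+1$. For instance, for $v\in A_1$ carrying a triangle only three edges leave $N[v]$, and $w_G(N[v])=21$ forces all three targets to be $2$-vertices.

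For the ``in particular'' half of~(i) and for~(ii)-1 I would read off the decomposition of $m$ over the neighbors of $v$. For $v\in B_1$ with $2$-neighbor $v'$ and $3$-neighbor $u$, the vertex $v'$ lies in $B_1$ and its far neighbor is a $3$-vertex, so the guaranteed $2$-step $2$-neighbor cannot arrive through $v'$ and must arrive through $u$; hence $u$ has a second $2$-neighbor and $u\in A_2$. For $v\in A_2$ with degree-$2$ neighbors $p,q$ and degree-$3$ neighbor $r$, the count splits as $m=[p\in B_1]+[q\in B_1]+(\#\,2\text{-neighbors of }r)$, and solving $m\ge 3$ over the finitely many possibilities leaves exactly the three neighbor-type profiles in~(ii)-1 (in particular $r\notin A_0$, and $p,q$ are not both in $B_0$).

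The part I expect to be the real obstacle is~(ii)-2, since the singleton count gives only $m\ge 1$ and does not by itself locate an $A_1$-neighbor possessing an $A_2$-neighbor; the plan is to bootstrap off the already-proved statements. First, no neighbor of $v\in A_0$ can lie in $A_2$, for such a neighbor would have $v$ as its unique $3$-neighbor, contradicting~(ii)-1, which forbids an $A_0$ vertex in the ``$A$-slot''; with $m\ge 1$ this forces $v$ to have an $A_1$-neighbor $r$. Next I would record that the $2$-neighbor of any $A_1$-vertex lies in $B_0$: otherwise part~(i) would furnish that $2$-neighbor with an $A_2$ $3$-neighbor, yet its only $3$-neighbor is the $A_1$-vertex. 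Applying~(ii) to $r\in A_1$, its two guaranteed $2$-step $2$-neighbors cannot come through $v\in A_0$ (whose neighbors are all $3$-vertices) nor through $r$'s $B_0$ $2$-neighbor, so both must come through $r$'s remaining $3$-neighbor $t$, forcing $t$ to have two $2$-neighbors, i.e.\ $t\in A_2$. The residual subtlety to dispatch is that $t$ might be adjacent to $v$ (a triangle through $r$), but a short \kl{} weight count on $N[r]$ rules this out, as it would compel a neighbor of the $A_0$-vertex $v$ to be a $2$-vertex.
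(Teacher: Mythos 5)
Your proposal is correct and takes essentially the same route as the paper: singleton applications of the Key Lemma with identical weight bookkeeping (your exact count $c_G(N[v]) = 2m + (6-i)$ is the paper's bound $c_G(N[v])\le i+6$ run directly rather than by contradiction), followed by the same bootstrap chain for (ii)-1 and (ii)-2 (an $A_2$-vertex has no $A_0$-neighbor, the $2$-neighbor of an $A_1$-vertex lies in $B_0$, and the two-step count then forces the remaining $3$-neighbor into $A_2$). Your explicit handling of triangles through $v$ and of the possible adjacency of $t$ to $v$ is sound but not needed in the paper's formulation, whose upper bound on $c_G(N[v])$ absorbs those degenerate cases silently.
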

\begin{proof}
(i): Let $v\in B_1$, so $v$ has a $2$-neighbor $v_1$ and a $3$-neighbor $v_2$.
If $v$ has no $2$-step $2$-neighbors,
then $w_G(N[v]) = 6 \cdot 2 + 5 = 17$ and $c_G(N[v]) \le 1 \cdot 3 = 3$, so
the  {\kl} implies $14>17-3$, which is a contradiction.
By Claim~\ref{clm:vcond}~(ii), $v_1\in B_1$. 
Since $v$ must have a $2$-step $2$-neighbor and $v_2$ cannot have only $2$-neighbors by Claim~\ref{clm:vcond}~(ii), we know $v_2\in A_2$.

(ii): Suppose to the contrary that a vertex $v \in A_i$  has at most $i$ $2$-step $2$-neighbors. 
Then $w_G(N[v]) = 6 \cdot i + 5 \cdot (4-i) = i+20$ and $c_G(N[v]) \le 3\cdot i + 1 \cdot ((3-i)\cdot2)=i+6$.
The {\kl} implies $14>(i+20)-(i+6)=14$, which is a contradiction.

If $i=2$, then the three neighbors of $v$ are in $A_2, B_1, B_1$, or $A_1, B_1, B_1$, or $A_2, B_0, B_1$. 

If $i=0$, then $v$ is an $A_0$-vertex. 
Since an $A_2$-vertex cannot have an $A_0$-neighbor, $v$ has no $A_2$-neighbors.
Since $v$ must have a $2$-step $2$-neighbor $w$, the common neighbor $u$ of $w$ and $v$ must be an $A_1$-vertex. 
Since a $B_1$-vertex cannot have an $A_1$-neighbor by (i), $w$ must be a $B_0$-vertex.
Since $u$ must have two $2$-step $2$-neighbors, the $3$-neighbor of $u$ distinct from $v$ is an $A_2$-vertex. 
\end{proof}

\begin{claim}\label{clm:a2b1}
The following holds in the graph $G$: 
\begin{enumerate}[\rm(i)]
\item The set $A_2$ is not empty. 
\item There is a cycle $C: x_1 x_2 \ldots x_k x_1$ in $G[A_2 \cup B_1]$ such that if $x_i,x_{i+1} \in  A_2$, then $x_i$ has a $B_0$-neighbor.
In particular, each $A_2$-vertex on $C$ has at most one $A_2$-neighbor on $C$ and each $B_1$-vertex on $C$ has exactly one $B_1$-neighbor on $C$. 
\item If two $A_2$-vertices $x,y$ have a common $B_0$-neighbor $v$, then there is a $5$-cycle $vxx_1y_1yv$ such that $x_1,y_1$ are $B_1$-vertices.
\end{enumerate}
\end{claim}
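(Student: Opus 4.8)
The plan is to establish the three parts in order, in each case arguing by contradiction and feeding a carefully chosen independent set into the \kl, while using the partition $V(G)=A_0\cup A_1\cup A_2\cup B_0\cup B_1$ together with the neighbour-type restrictions of Claim~\ref{clm:one3}.

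\emph{Part (i).} Suppose $A_2=\emptyset$. Then $B_1=\emptyset$ by Claim~\ref{clm:one3}(i), since each $B_1$-vertex has an $A_2$-neighbour, and $A_0=\emptyset$ by Claim~\ref{clm:one3}(ii)-2, since each $A_0$-vertex would force an $A_2$-vertex. Hence $V(G)=A_1\cup B_0$, and since $G$ has a $3$-vertex we have $A_1\neq\emptyset$, which forces $B_0\neq\emptyset$ because every $A_1$-vertex has a $2$-neighbour. As $B_1=\emptyset$, the unique $2$-neighbour of each $A_1$-vertex lies in $B_0$, so $B_0$ is a nonempty independent dominating set and $i(G)\le|B_0|$. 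Double counting the edges between $A_1$ and $B_0$ gives $|A_1|=2|B_0|$, so $w(G)=6|B_0|+5\cdot 2|B_0|=16|B_0|$; then $14i(G)\le 14|B_0|<16|B_0|=w(G)$, contradicting $14i(G)>w(G)$. Hence $A_2\neq\emptyset$.

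\emph{Part (ii).} I would encode the relevant adjacencies in an auxiliary multigraph $\mathcal H$ with vertex set $A_2$. By Claim~\ref{clm:one3}(i) the $B_1$-vertices come in adjacent pairs $\{b,b'\}$, each joined to a distinct $A_2$-vertex (distinct by Claim~\ref{clm:vcond}(v)); I add a ``long'' edge of $\mathcal H$ between these two $A_2$-vertices for each such pair, and a ``short'' edge for each $A_2$--$A_2$ edge of $G$. Using the three neighbourhoods allowed by Claim~\ref{clm:one3}(ii)-1, every vertex of $\mathcal H$ has degree at least $2$. I then orient: both directions on each long edge; each short edge from its endpoint that has a $B_0$-neighbour toward the other; and I delete the short edges whose endpoints both lack a $B_0$-neighbour (such an endpoint still keeps its two long edges). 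A short check of the three neighbourhood types shows every vertex now has in- and out-degree at least $1$, so $\mathcal H$ has a directed cycle. Replacing each long edge by its length-three path through its $B_1$-pair lifts this to a cycle $C$ in $G[A_2\cup B_1]$, which is simple because distinct long edges use disjoint $B_1$-pairs and no short and long edge join the same pair of $A_2$-vertices (that would create a $4$-cycle). Every $A_2$--$A_2$ edge of $C$ is traversed out of a vertex with a $B_0$-neighbour, which is exactly the required condition; the two ``in particular'' statements are then immediate, the first because $G[A_2]$ is a matching and the second because each $B_1$-vertex has degree exactly $2$ in $G[A_2\cup B_1]$.

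\emph{Part (iii).} Since $x$ and $y$ each have the $B_0$-neighbour $v$, Claim~\ref{clm:one3}(ii)-1 forces both into type $(A_2,B_0,B_1)$, so each has a unique $B_1$-neighbour $x_1$ and $y_1$; I must show $x_1y_1\in E(G)$, i.e.\ that $x_1$ and $y_1$ are $B_1$-partners. Assuming not, $x_1$ and $y_1$ have distinct partners $x_1^\ast,y_1^\ast$, and I would derive a contradiction by applying the \kl to an independent set built around $v$ whose closed neighbourhood captures $v,x,y$ together with the relevant $B_1$-vertices, arranged so that $v$ lies inside the removed set (hence is not left as a low-degree boundary vertex), while the extra distinct partners, constrained by the $2$-step-$2$-neighbour counts of Claim~\ref{clm:one3}(ii), keep the weight correction small. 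I expect this to be the main obstacle: unlike (i) and (ii), a single \kl application on the obvious sets $\{v\}$ or $\{x,y\}$ does not suffice, because isolating $v$ or exposing the partners inflates the correction $c_G$; the delicate point is to choose the independent set (or, alternatively, to perform a local modification that suppresses $v$ and reconnects $x_1^\ast,y_1^\ast$ while preserving the absence of $4$-cycles) so that the weight accounting closes exactly, after ruling out the various coincidences among $x_1,y_1,x_1^\ast,y_1^\ast$ and their $A_2$-neighbours.
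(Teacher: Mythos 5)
Your part (i) is correct and is essentially the paper's own argument: $A_2=\emptyset$ forces $B_1=A_0=\emptyset$ via Claim~\ref{clm:one3}, whence $B_0$ is an independent dominating set, $n_3(G)=2n_2(G)$, and $w(G)=16n_2(G)>14i(G)$. Your part (ii) takes a genuinely different route: the paper runs a direct greedy walk in $G[A_2\cup B_1]$ (from an $A_2$-vertex with a fresh $B_1$-neighbour, append the $B_1$--$B_1$--$A_2$ path guaranteed by Claim~\ref{clm:one3}; otherwise the vertex is of type $(A_2,B_0,B_1)$ and one steps to its $A_2$-neighbour), while you build an auxiliary oriented multigraph on $A_2$ and extract a directed cycle. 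Your version has one flaw: a directed cycle in your digraph may be a digon consisting of the two opposite orientations of a \emph{single} long edge (or of a single $(A_2,B_0,B_1)$--$(A_2,B_0,B_1)$ short edge oriented both ways), and such a digon lifts to a closed walk in $G$, not a cycle; your simplicity argument only rules out coincidences between \emph{distinct} long/short edges. This is patchable: in your orientation every vertex in fact has out-degree at least $2$, so one can walk while forbidding immediate reversal of the arc just used, and then any extracted digon uses two distinct parallel edges, which either lift to a $6$-cycle (two long edges) or are excluded outright (a parallel long and short edge would force a $4$-cycle in $G$).

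The genuine gap is part (iii): you do not prove it. You only describe a strategy, concede that ``the weight accounting'' is the obstacle, and defer ``ruling out the various coincidences''; no independent set is exhibited and no computation is closed. Moreover, your stated expectation that no single application of the {\kl} suffices is wrong --- the paper finishes with exactly one application, to the non-obvious independent set $\{x_1,y\}$, where $x_1$ is the $B_1$-neighbour of $x$. Assuming $x_1y_1\notin E(G)$, the set $N[x_1,y]$ consists of $x_1$, its $B_1$-partner, $x$, $y$, $v$, $y_1$, and the $A_2$-neighbour of $y$, so $w_G(N[x_1,y])=6\cdot 4+5\cdot 3=39$; by Claim~\ref{clm:vcond}~(ii) every $2$-step neighbour of $x_1$ outside this set is a $3$-vertex, and by Claim~\ref{clm:one3}~(ii)-1 every $2$-step neighbour of $y$ outside it is a $2$-vertex, giving $c_G(N[x_1,y])\le 3\cdot 3+1\cdot 2=11$ and the contradiction $28=2\cdot 14>39-11=28$. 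Note that your instinct that $v$ should lie inside the removed neighbourhood is realized here (indeed $v\in N[y]$), but the decisive idea you are missing is to anchor the set at $x_1$ rather than at $v$ or at $\{x,y\}$: that choice makes all five boundary edges land on vertices whose weight gain is controlled by the two cited claims, so the accounting closes exactly.
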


\begin{proof}
(i): Suppose to the contrary that $G$ has no $A_2$-vertex.
Claim~\ref{clm:one3} implies that there are neither $B_1$-vertices nor $A_0$-vertices. 
Thus, every $3$-vertex has a $2$-neighbor, so the set of $2$-vertices is a dominating set. 
Since every $2$-vertex is a $B_0$-vertex, $B_0$ dominates $G$.
Moreover, $B_0$ is an independent dominating set in $G$ by definition, so $i(G)\leq n_2(G)$.
Also, $n_3(G) = 2n_2(G)$ since there are no $A_2$-vertices. 
Thus, $w(G)=6n_2(G)+5n_3(G)=16n_2(G)\geq 16i(G) > 14i(G)$, which is a contradiction.
Hence, $G$ must have some $A_2$-vertices. 

(ii): Note that there exists an $A_2$-vertex $v_1$ by (i). 
We find a cycle $C:x_1x_2\ldots x_1$ of $G[A_2\cup B_1]$ containing $v_1$ by the following algorithm:
Let $x_1=v_1$. 
Consider an $A_2$-vertex $x_i$ on $C$. 
(The indices of the vertices on $C$ are considered modulo $k$.)
If $x_i$ has an $B_1$-neighbor $u_1$ that is not $x_{i-1}$, then there is a path $u_1u_2u_3$ where $u_1,u_2$ are $B_1$-vertices and $u_3$ is an $A_2$-vertex by Claim~\ref{clm:one3}~(i) and~(ii).
Let $x_{i+1}=u_1$, $x_{i+2}=u_2$, and $x_{i+3}=u_3$. 
(Note that $x_ix_{i+1}x_{i+2}x_{i+3}$ is indeed a path since $x_i \neq x_{i+3}$ by Claim~\ref{clm:vcond}~(v).)
If the only $B_1$-neighbor of $x_i$ is $x_{i-1}$, then let $x_{i+1}$ be the $A_2$-neighbor of $x_i$, which exists by Claim~\ref{clm:one3}~(ii).  

Since $G$ is a finite graph, this algorithm must terminate. 
Note that each $A_2$-vertex $x_i$ with $x_{i+1}\in A_2$ must have an $B_0$-neighbor by Claim~\ref{clm:one3}~(ii).
In particular, by the algorithm, each $A_2$-vertex on $C$ has at most one $A_2$-neighbor on $C$ and each $B_1$-vertex on $C$ has exactly one $B_1$-neighbor on $C$. 

(iii): 
By Claim~\ref{clm:one3}~(ii), $x$ and $y$ have a $B_1$-neighbor $x_1$ and $y_1$, respectively.
Suppose to the contrary that $x_1y_1$ is not an edge.
By Claim~\ref{clm:one3}~(i), $x_1$ has a $2$-neighbor, so $w_G(N[x_1,y]) = 6 \cdot 4 + 5 \cdot 3 = 39$.
By Claim~\ref{clm:vcond}~(ii), every $2$-step neighbor of $x_1$ not in $N[x_1,y]$ is a $3$-vertex. 
By Claim~\ref{clm:one3}~(ii), every $2$-step neighbor of $y$ not in $N[x_1,y]$ is a $2$-vertex.
Thus, $c_G(N[x_1,y]) \le 3 \cdot 3 + 1 \cdot 2 = 11$, so the {\kl} implies $28 = 14 \cdot 2 > 39-11=28$, which is a contradiction.
Hence, $x_1y_1\in E(G)$, so $vxx_1y_1yv$ is a $5$-cycle.
\end{proof}

By Claim~\ref{clm:a2b1}~(ii), there is a cycle $C: x_1 x_2 \ldots x_k x_1$ in $G[A_2 \cup B_1]$ such that if $x_i,x_{i+1} \in A_2$, then $x_i$ has a $B_0$-neighbor.
(The indices of the vertices on $C$ will always be considered modulo $k$.)  
Define the following subset of $V(C)$:  \begin{eqnarray*}
X  &=&\{x_i \in B_1  \mid x_{i-1} \in A_2, x_{i+1} \in B_1\}.\end{eqnarray*}
Let $y_i$ be the $B_0$-neighbor of $x_i$ when $x_i,x_{i+1} \in A_2$, and let $Y$ be the set of all $y_i$'s. 
Note that $X\cup Y$ consists of $2$-vertices, and $X\cup Y$ is an independent set in $G$ by definition.
Let $Y_j$ be the set of $Y$-vertices with  exactly $j$ $A_2$-neighbors.
If $y_i$ is a $Y_2$-vertex, then $y_ix_ix_{i-1}x_{i-2}x_{i-3}y_i$ is a $5$-cycle by Claim~\ref{clm:a2b1}~(iii).
Thus, both neighbors of a $Y_2$-vertex are $(A_2\cap V(C))$-vertices, and one of them has an $X$-neighbor.
See Figure~\ref{fig:overall} for an illustration.

\begin{figure}
    \centering 
    \includegraphics[page=5]{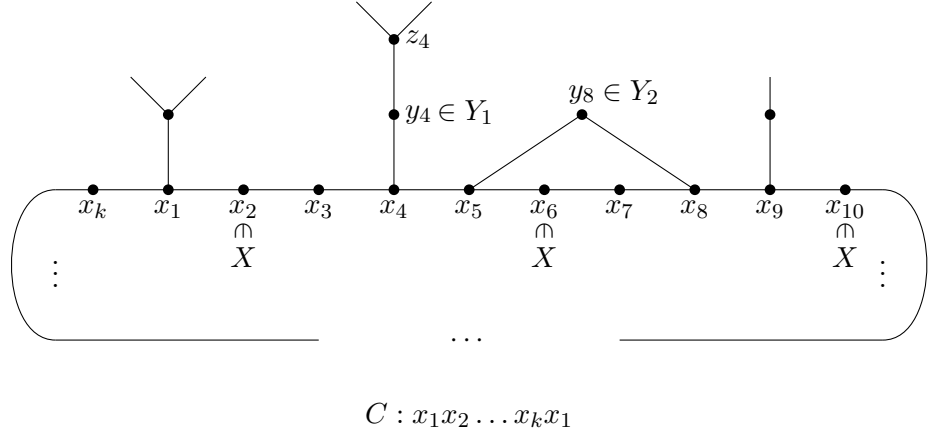}
    \caption{An illustration for the cycle $C$}
    \label{fig:overall}
\end{figure}

Define the following two sets:
\begin{eqnarray*}
X^* = \{ x_{i-1}  \mid x_{i}\in X\} \setminus N(Y),
\quad
Y^* = \{z_i\mid\mbox{$z_i$ is the $A_1$-neighbor of $y_i$ distinct from $x_i$}\}.
\end{eqnarray*}
Note that $z_i$ is an $A_1$-vertex and is not on $C$, and $|X^*|=|X|-|Y_2|$ and $|Y^*|=|Y_1|$,

\begin{claim}\label{clm:two:C}
If $X^*$-vertices $x_i,x_j$ have a common neighbor $v\not\in  V(C)$, then $v$ is a $3$-vertex.  
\end{claim}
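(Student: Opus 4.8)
The plan is to argue by contradiction, assuming that the common neighbor $v$ is \emph{not} a $3$-vertex. Since $G$ has no $1$-vertices by Claim~\ref{claim:n1} and is connected on at least five vertices, every vertex is a $2$- or $3$-vertex, so it suffices to rule out $d(v)=2$. Note that $x_i,x_j$ are distinct $A_2$-vertices, hence $3$-vertices, and $v\notin V(C)$ is adjacent to both of them. Therefore, if $v$ were a $2$-vertex, its two neighbors would be exactly $x_i$ and $x_j$, so $v$ would have no $2$-neighbor, i.e.\ $v\in B_0$. This is the foothold I would build on.

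Next I would pin down the local structure at $x_i$, and symmetrically at $x_j$. Unwinding the definitions, $x_i\in X^*$ means $x_i\in A_2$ with $x_{i+1}\in X\subseteq B_1$ and $x_{i+2}\in B_1$. Since $x_i\in A_2$ has exactly two $2$-neighbors, and both $x_{i+1}\in B_1$ and $v\in B_0$ are $2$-vertices adjacent to $x_i$, these are precisely its two $2$-neighbors; hence its remaining neighbor $x_{i-1}$ is a $3$-vertex, so $x_{i-1}\in A_2$. Consequently $x_{i+1}$ is the \emph{unique} $B_1$-neighbor of $x_i$, and symmetrically $x_{j+1}$ is the unique $B_1$-neighbor of $x_j$. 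I would then apply Claim~\ref{clm:a2b1}~(iii) to the two $A_2$-vertices $x_i,x_j$ with common $B_0$-neighbor $v$: it produces a $5$-cycle through $v$, $x_i$, and $x_j$ whose two remaining vertices are $B_1$-neighbors of $x_i$ and of $x_j$, respectively. By the uniqueness just established, these two vertices must be $x_{i+1}$ and $x_{j+1}$, so in particular $x_{i+1}x_{j+1}\in E(G)$.

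The decisive step is to turn this edge into a contradiction using membership in $X$. Because $x_{i+1}\in X\subseteq B_1$ is a $2$-vertex, its only neighbors are its two neighbors on $C$, namely $x_i\in A_2$ and $x_{i+2}\in B_1$. Since $x_{j+1}$ is a $2$-vertex (it lies in $B_1$) whereas $x_i$ is a $3$-vertex, the edge $x_{i+1}x_{j+1}$ forces $x_{j+1}=x_{i+2}$, and hence $x_j=x_{i+1}$. This is absurd, as $x_j\in A_2$ is a $3$-vertex while $x_{i+1}\in B_1$ is a $2$-vertex. Therefore $v$ cannot be a $2$-vertex, so $v$ is a $3$-vertex.

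The argument is short, and the only place that demands genuine care, which I regard as the main (if modest) obstacle, is the bookkeeping in applying Claim~\ref{clm:a2b1}~(iii): one must correctly identify the two $B_1$-vertices of the resulting $5$-cycle as the cyclic successors $x_{i+1}$ and $x_{j+1}$ on $C$, rather than some other $B_1$-neighbors. This is exactly what the uniqueness of the $B_1$-neighbor (forced by $x_i\in A_2$ together with $v\in B_0$) is designed to secure; once that identification is in place, the fact that an $X$-vertex is a $2$-vertex with a fully determined neighborhood on $C$ delivers the contradiction immediately.
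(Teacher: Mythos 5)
Your proof is correct and takes essentially the same route as the paper: assume $v$ is a $2$-vertex, deduce $v \in B_0$, invoke Claim~\ref{clm:a2b1}~(iii) to obtain the $5$-cycle through $v$, $x_i$, $x_j$, and chase indices along $C$ to a contradiction. The only difference is cosmetic bookkeeping in the endgame: the paper walks the $5$-cycle from $x_i$'s side to conclude $x_j = x_{i+3}$ and then exhibits three $2$-neighbors of $x_j$ (namely $x_{i+2}$, $x_{i+4}$, $v$), whereas you identify the unique $B_1$-neighbors $x_{i+1}$, $x_{j+1}$ on both sides and force $x_j = x_{i+1} \in B_1$, an immediate contradiction with $x_j \in A_2$.
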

\begin{proof}
Suppose to the contrary that $v$ is a $2$-vertex. 
Since $x_i, x_j$ are $A_2$-vertices, we know $v \in B_0$.
By Claim~\ref{clm:a2b1}~(iii), there is a $5$-cycle $x_ix_{i+1}x_{i+2}x_jvx_i$, where $x_{i+1}, x_{i+2}$ are $B_1$-vertices, so $j=i+3$.
Since $x_{i+3}$ is an $X^*$-vertex, we obtain that $x_{i+4}$ is a $B_1$-vertex, which is a contradiction since now $x_{i+3}$ has three $2$-neighbors $x_{i+2}$, $x_{i+4}$, and $v$.
\end{proof}

\begin{claim}\label{clm:z2}
Two $Y^*$-vertices do not have a common neighbor.
\end{claim}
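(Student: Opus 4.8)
The plan is to argue by contradiction using the \kl. Suppose two $Y^*$-vertices $z_i$ and $z_j$ have a common neighbor $v$. By definition each $z_\ell$ is an $A_1$-vertex whose unique $2$-neighbor is the $B_0$-vertex $y_\ell$, and $y_\ell$ has an $A_2$-neighbor $x_\ell$ lying on $C$. First I would record the easy structural facts. Since the only $2$-neighbor of $z_i$ is $y_i$, and $v\neq y_i$ (because $v$ is also adjacent to $z_j$, whereas the neighbors of $y_i$ are $x_i$ and $z_i$), the common neighbor $v$ is a $3$-vertex; moreover $v\not\sim x_i,x_j$, since otherwise $v z_i y_i x_i$ (or its $j$-analogue) would be a $4$-cycle. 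I would also dispose of the degenerate overlaps as separate short sub-cases: $z_i=z_j$ is excluded by hypothesis, and the possibilities $z_i\sim z_j$, or where the third neighbors $p_i,p_j$ of $z_i,z_j$ or the third neighbor $w$ of $v$ coincide with $x_i,x_j$ or with each other, each collapse the configuration onto a small pattern already forbidden by Claim~\ref{clm:vcond}.

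With the generic configuration in hand, the strategy is to exhibit an independent set $S$ for which the \kl{} fails, that is, $14|S|\le w_G(N[S])-c_G(N[S])$. The two natural seeds are $S=\{y_i,y_j\}$ and $S=\{z_i,z_j\}$. For the latter, $N[S]=\{z_i,y_i,v,p_i,z_j,y_j,p_j\}$ carries weight $w_G(N[S])=2\cdot 6+5\cdot 5=37$, so a contradiction needs $c_G(N[S])\le 9$. The key structural input is Claim~\ref{clm:one3}(ii): since $z_i,z_j\in A_1$, each has at least two $2$-step $2$-neighbors. Because the route from $z_i$ through $v$ only reaches the $3$-vertex $z_j$ and the vertex $w$, these forced $2$-vertices must appear among $w$ and the neighbors of $p_i,p_j$. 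I would then split according to whether $v\in A_0$ or $v\in A_1$. If $v\in A_0$, then $w$ is a $3$-vertex, so the two $2$-step $2$-neighbors of $z_i$ promised by Claim~\ref{clm:one3}(ii) must both come through $p_i$, forcing $p_i\in A_2$ (and likewise $p_j\in A_2$); if $v\in A_1$, then $w$ is a $2$-vertex and each of $p_i,p_j$ still carries a $2$-neighbor. Claim~\ref{clm:vcond} then fixes the types ($B_0$ versus $B_1$) of the resulting $2$-vertices.

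In each case the fix is to enlarge $S$ along these forced chains, adding the $B_0$-vertices that dominate $p_i,p_j$ (and $w$, when $v\in A_1$) so that $N[S]$ absorbs the forced $2$-vertices as interior weight of value $6$ rather than letting them leak across the boundary. Bounding $c_G(N[S])$ then relies on the no-$4$-cycle hypothesis in the form already used repeatedly, namely that no two vertices have two common neighbors: apart from $v$ (which loses both of its edges to $z_i,z_j$), every boundary vertex loses exactly one edge, and its weight change is read off from Claims~\ref{clm:vcond} and~\ref{clm:one3}. Comparing the resulting $w_G(N[S])-c_G(N[S])$ against $14|S|$ should then yield the desired violation of the \kl.

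The main obstacle is precisely the estimate on $c_G(N[S])$. The shared vertex $v$ drops to degree $1$ and therefore contributes a full $9-5=4$, and each forced $2$-step $2$-neighbor guaranteed by Claim~\ref{clm:one3}(ii) contributes $9-6=3$ when it sits on the boundary; this is exactly why neither size-$2$ seed closes the gap on its own, and why $S$ must be grown by just enough to internalize these $2$-vertices without letting $14|S|$ outrun the weight gained. Maintaining this balance, while simultaneously checking that the no-$4$-cycle condition prevents any unexpected coincidence -- a boundary vertex adjacent to two members of $N[S]$, or one of the forced $2$-vertices coinciding with $y_i$ or $y_j$ -- from inflating $c_G(N[S])$, is the delicate part of the argument.
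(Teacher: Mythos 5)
Your overall framework matches the paper's---proof by contradiction, a case split on $v\in A_0$ versus $v\in A_1$, and {\kl} applications to carefully chosen independent sets---but the proposal stops exactly where the proof has to happen, and the plan for finishing it contains two concrete defects. First, the enlargement step is broken in the $v\in A_0$ branch: you correctly force $p_i,p_j\in A_2$, but since $p_i$ then has the $A_1$-neighbor $z_i$, Claim~\ref{clm:one3}~(ii)-1 says its other two neighbors are both $B_1$-vertices, so the ``$B_0$-vertices that dominate $p_i,p_j$'' which you propose to add to $S$ do not exist. Growing through $B_1$-vertices instead drags in their $B_1$-partners and $A_2$-neighbors and does not obviously terminate with the right weight balance; you never verify that any enlarged seed closes the gap. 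The paper avoids this entirely by choosing a different seed: it exploits the cycle $C$ near $x_i$ (where $x_{i-1},x_{i-2}\in B_1$ and $x_{i+1}\in A_2$) and applies the {\kl} to $S=\{x_{i-1},z_i\}$, getting $w_G(N[S])=38$ and $c_G(N[S])\le 10$, hence the contradiction $28>28$; in the $v\in A_1$ branch it uses the mixed seed $\{v,x_i,x_j\}$, which works precisely because the $2$-step neighbors of $v$ outside $N[v,x_i,x_j]$ are $3$-vertices while those of $x_i,x_j$ are $2$-vertices.

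Second, and more seriously, your closing paragraph treats boundary coincidences as pathologies that the no-$4$-cycle hypothesis should exclude. It does not, and this is not a checkable afterthought but a genuine case of the proof. In the paper's $v\in A_1$ computation with $U_1=N[v,x_i,x_j]$, the estimate $c_G(U_1)\le 23$ (yielding $42>42$) can only fail when two outside vertices each absorb two edges of $\partial(U_1)$; Claim~\ref{clm:two:C} rules out all candidates except $x_{i-2}=x_{j+2}$ and $x_{j-2}=x_{i+2}$, and that coincidence is perfectly consistent with all the structural claims---it just pins down the cycle to $k=8$. The paper then needs a third, separate {\kl} application ($S=\{x_0,x_3\}$ after normalizing $i=0$, $j=4$, giving $w=39$, $c\le 11$, so $28>28$) to kill this surviving configuration. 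Without identifying this degenerate case and handling it by its own argument, the strategy you outline cannot be completed, however the seed-growing is arranged.
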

\begin{proof}
Suppose to the contrary that two distinct $Y^*$-vertices $z_i$ and $z_j$ have a common neighbor $v$. 
Since each $Y^*$-vertex is an $A_1$-vertex, $v$ must be a $3$-vertex. 
Note that $x_{i-1}, x_{i-2} \in B_1$ and $x_{i+1} \in A_2$, which further implies $x_{i+1}$ is a $2$-vertex in $G-N[x_{i-1},z_i]$.

Suppose $v \in A_0$. 
Then $w_G(N[x_{i-1}, z_i]) = 6 \cdot 3 + 5 \cdot 4 = 38$, and the neighbor of $z_i$ distinct from $v$ and $y_i$ is an $A_2$-vertex by Claim~\ref{clm:one3}~(ii)-2.
Thus if there is a common $2$-step neighbor of $x_{i-1}$ and $z_i$, then $v$ is adjacent to either $x_{i-3}$ or $x_{i+1}$, which is impossible since $v \in A_0$ and $x_{i-3},x_{i+1} \in A_2$.
Thus $c_G(N[x_{i-1},z_i]) \le 3 \cdot 2 + 1 \cdot 4 = 10$, so the {\kl} implies $28>38-10$, which is a contradiction.
Thus $v \in A_1$, so $v$ has a $B_0$-neighbor by Claim~\ref{clm:one3}~(i).

Let $U_1 = N[v, x_i, x_j]$, so $w_G(U_1) = 6 \cdot 5 + 5 \cdot 7 = 65$.
Note that every $2$-step neighbor of $v$ not in $U_1$ is a $3$-vertex, and every $2$-step neighbor of $x_i$ or $x_j$ not in $U_1$ is a $2$-vertex.
By Claim~\ref{clm:two:C}, if $x_{i+1}, x_{j+1} \in X^*$ have a common neighbor $u \not\in  V(C)$, then $u$ is a $3$-vertex, which is impossible since $x_i$ and $x_j$ are the $3$-neighbors of $x_{i+1}$ and $x_{j+1}$, respectively.
Thus, there are at most two vertices in $G-U_1$ that are incident with two edges of $\partial(U_1)$, namely, $x_{i-2}$ and $x_{j-2}$.
If there is at most one vertex in $G-U_1$ that is incident with two edges of $\partial(U_1)$, then $c_G(U_1) \le 8 + 3 \cdot 4 + 1 \cdot 3 = 23$, so the {\kl} implies $42 > 65-23=42$, which is a contradiction.
Thus, it must be that $x_{i-2} = x_{j+2}$ and $x_{j-2}=x_{i+2}$, so $k = 8$.

Without loss of generality, let $i = 0$ and $j = 4$.
In this case, $w_G(N[x_0,x_3]) = 6 \cdot 4 + 5 \cdot 3 = 39$ and $c_G(N[x_0,x_3]) \le 3 \cdot 3 + 1 \cdot 2 = 11$, so the {\kl} implies $28>39-11$, which is a contradiction.
\end{proof}

Let $U=N[X\cup Y]$, so  $|U| = 3|X| + 3|Y_1| + 2|Y_2|$ and $w_G(U) = 17|X| + 16|Y_1| + 11|Y_2|$.
Note that $G[U]$ contains $C$ as a subgraph and every edge in $\partial(U)$ is incident with a vertex in $X^*\cup Y^*$. 
By the {\kl}, $14|X| + 14|Y_1| + 14|Y_2| = 14|X \cup Y| > (17|X| + 16|Y_1| + 11|Y_2|) - c_G(U)$,  so  \begin{eqnarray}\label{eq:C}
&&c_G(U)>3|X|+2|Y_1|-3|Y_2|.
\end{eqnarray}

Define the initial charge $\mu(u)$ for every vertex $u\in X^*\cup Y^*\cup (V(G)\setminus U)$ as below:
\[\mu(u) = \begin{cases}
 -3 &\text{ if } u \in X^*,\\
 -2 &\text{ if } u \in Y^*,\\
 w_{G-U}(u)-w_{G}(u) &\text{ if } u \in V(G)\setminus U. 
\end{cases}\]
Note that the sum of the initial charge is $c_G(U)-3|X^*|-2|Y^*|=c_G(U)-3(|X|-|Y_2|)-2|Y_1|$, which is positive by \eqref{eq:C}. 
We distribute the charge by the following rule, so the sum of the total charge is preserved. 
Recall that every edge in $\partial(U)$ is incident with a vertex in $X^*\cup Y^*$.

\smallskip

\noindent
\textbf{[Rule]} Let $uv\in \partial(U)$ where $u\in U$.
If $u\in X^*$, then $u$ sends $-3$ to $v$. If $u\in Y^*$, then $u$ sends $-1$ to $v$.

\smallskip

Let $\mu^*(u)$ be the final charge of each vertex $u$ after applying the above rule. 
We will obtain a contradiction by proving that the final charge of each vertex in $X^*\cup Y^*\cup (V(G)\setminus U)$ is non-positive. 

\begin{claim}\label{clm:final_ch}
$\mu^*(v)\le 0$ for a vertex $v\in X^*\cup Y^*\cup (V(G)\setminus U)$.
\end{claim}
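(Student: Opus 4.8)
The plan is to prove that every vertex in $X^* \cup Y^* \cup (V(G) \setminus U)$ has non-positive final charge $\mu^*$, which together with the positivity of the total initial charge from \eqref{eq:C} yields the desired contradiction. I would organize the argument by the three types of vertices, handling each via a case analysis on their role in the charging scheme.

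First I would handle the vertices $u \in X^*$ and $u \in Y^*$. These start with negative initial charge ($-3$ and $-2$ respectively) and only \emph{send} charge out by the \textbf{[Rule]}; they never receive any. An $X^*$-vertex $x_{i-1}$ is an $A_2$-vertex, so it has exactly one edge in $\partial(U)$ (its two $2$-neighbors $x_{i-2}$ and $x_i$ lie on $C$, hence in $U$), along which it sends $-3$, giving $\mu^*(x_{i-1}) = -3 - (-3) = 0$. A $Y^*$-vertex $z_i$ is an $A_1$-vertex; by Claim~\ref{clm:one3}~(ii)-2 it has an $A_2$-neighbor off $C$ and a $B_0$-neighbor, so it has two edges in $\partial(U)$ and sends $-1$ along each, giving $\mu^*(z_i) = -2 - (-2) = 0$. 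The careful bookkeeping here is to confirm exactly how many $\partial(U)$-edges each such vertex carries, using the structural claims to pin down the degrees and neighbor-types.

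The main work, and the expected obstacle, is bounding $\mu^*(v)$ for $v \in V(G) \setminus U$. Here $\mu(v) = w_{G-U}(v) - w_G(v) \ge 0$ is the (non-negative) weight increase from deleting $U$, and $v$ receives $-3$ or $-1$ from each incident $\partial(U)$-edge according to whether its endpoint in $U$ lies in $X^*$ or $Y^*$. I would argue that the total charge received is negative enough to cancel the weight gain: since $G$ has no $4$-cycles, distinct $X^*$- or $Y^*$-vertices cannot share two common neighbors, and Claim~\ref{clm:two:C} and Claim~\ref{clm:z2} control precisely when two such senders reach a common $v$. The plan is to split into cases according to the original degree of $v$ and the number and types of senders adjacent to it: a vertex receiving even a single $-3$ from an $X^*$-vertex already offsets the maximum weight gain ($w_{G-U}(v)-w_G(v) \le 3$ for a $3$-vertex dropping degree), while vertices reached only by $Y^*$-senders need the counting restrictions from Claim~\ref{clm:z2} (no two $Y^*$-vertices share a neighbor) to guarantee at most the allowable amount of weight gain per unit of negative charge received.

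The delicate point will be the vertices that gain substantial weight—namely a $3$-vertex that becomes a $1$-vertex or isolated in $G-U$, contributing up to $+9$ or more—since these require multiple incident $\partial(U)$-edges, and I must verify that the senders along those edges deliver enough negative charge. I would lean on the no-$4$-cycle hypothesis together with Claims~\ref{clm:two:C} and~\ref{clm:z2} to bound how many distinct senders of each type can be adjacent to a single $v$, and on Claim~\ref{clm:one3} and Claim~\ref{clm:a2b1} to rule out the configurations where weight gain would exceed the charge sent. Concluding that $\mu^*(v) \le 0$ in every case contradicts the positive total charge, completing the proof.
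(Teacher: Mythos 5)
Your treatment of the $X^*$ and $Y^*$ vertices is essentially the paper's (each $X^*$-vertex is incident with at most one edge of $\partial(U)$, each $Y^*$-vertex with at most two, since its $2$-neighbor $y_i$ lies in $U$ — though your appeal to Claim~\ref{clm:one3}~(ii)-2 there is misplaced, the degree count alone suffices). But for $v \in V(G)\setminus U$ your proposal is a plan rather than a proof, and its guiding heuristic is false: it is \emph{not} true that ``a single $-3$ from an $X^*$-vertex already offsets the maximum weight gain.'' Since all of $v$'s neighbors in $U$ lie in $X^*\cup Y^*$, a $2$-vertex losing both neighbors gains $14-6=8$, i.e., $4$ per incident edge; with two $X^*$-neighbors it receives only $6$, and with one $X^*$- and one $Y^*$-neighbor only $4$. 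The first of these is exactly what Claim~\ref{clm:two:C} eliminates (you do gesture at this), but the second is eliminated only by a structural fact you never state: every neighbor of a $Y^*$-vertex outside $U$ is a $3$-vertex, because a $Y^*$-vertex is an $A_1$-vertex whose unique $2$-neighbor is $y_i\in U$. Without this observation, all the $2$-vertex-with-a-$Y^*$-neighbor cases (gain $3$ vs.\ charge $1$, gain $8$ vs.\ charge $4$) remain open and your argument fails.

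The second concrete gap is the worst configuration, where arithmetic alone cannot close the case: a $3$-vertex $v$ with $|X^*\cap N(v)|=2$ and $|Y^*\cap N(v)|=1$ becomes isolated in $G-U$, gaining $14-5=9$ while receiving only $3\cdot 2+1=7$, so $\mu^*(v)=2>0$. The paper disposes of this not by counting but by showing the configuration cannot exist: such a $v$ would have only $3$-neighbors, hence be an $A_0$-vertex adjacent to an $A_2$-vertex (as $X^*\subseteq A_2$), contradicting Claim~\ref{clm:one3}~(ii)-1. Your proposal's generic statement that you ``would lean on Claim~\ref{clm:one3} and Claim~\ref{clm:a2b1} to rule out the configurations where weight gain would exceed the charge sent'' does not identify this case or the mechanism that kills it; since this is the one place where the charging scheme is genuinely over budget, leaving it unexamined is a real gap, not a deferrable detail. (You should also note that several surviving cases are tight — e.g., one $X^*$- plus one $Y^*$-neighbor on a $3$-vertex gives $4=3+1$ — so the contradiction rests on the strict inequality in \eqref{eq:charge}; your sketch never checks these equalities.)
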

\begin{proof}
Since an $X^*$-vertex is incident with at most one edge of $\partial(U)$ and a $Y^*$-vertex  is incident with at most two edges of $\partial(U)$,  it is clear from the definitions that
$\mu^*(v)\le 0$ for a vertex $v\in X^*\cup Y^*$.
Suppose to the contrary that there is a vertex $v\in V(G)\setminus U$ such that $\mu^*(v)>0$.
Clearly, $v$ is incident with a vertex in $X^*\cup Y^*$.  
By [{\bf{Rule}}], we have
\begin{equation}\label{eq:charge}
w_{G-U}(v)-w_{G}(v) > 3|X^*\cap N(v)|+|Y^*\cap N(v)|. 
\end{equation}
We divide the proof into two cases.

\smallskip
\noindent\textbf{(Case 1)} 
Suppose that $v$ has no $Y^*$-neighbor, so $v$ must have an $X^*$-neighbor.
If $|X^*\cap N(v)|=1$, then  $w_{G-U}(v)-w_G(v) \le 3 = 3|X^*\cap N(v)|$, which is a contradiction to \eqref{eq:charge}.
If $|X^*\cap N(v)|=3$, then $v$ is a $3$-vertex in $G$, and $w_{G-U}(v)-w_{G}(v) = 9= 3|X^*\cap N(v)|$, which is a contradiction to \eqref{eq:charge}.
Thus $|X^*\cap N(v)|=2$.
If $v$ is a $3$-vertex, then $w_{G-U}(v)-w_G(v) = 4 <6=3|X^* \cap N(v)|$, which is a contradiction to \eqref{eq:charge}.
Thus, $v$ is a $2$-vertex with only $X^*$-neighbors, which is a contradiction to Claim~\ref{clm:two:C}.

\smallskip
\noindent\textbf{(Case 2)} 
Suppose that $v$ has a $Y^*$-neighbor.
By the definition of $Y^*$, we see that $v$ is a $3$-vertex.
By Claim~\ref{clm:z2}, $v$ has exactly one $Y^*$-neighbor. 
If $|X^* \cap N(v)| =0$, 
then $w_{G-U}(v)-w_G(v) = 1 =3|X^*\cap N(v)|+|Y^*\cap N(v)|$, which is a contradiction to \eqref{eq:charge}.
If $|X^* \cap N(v)| =1$, 
then $w_{G-U}(v)-w_G(v) = 4 = 3|X^*\cap N(v)|+|Y^*\cap N(v)|$, which is a contradiction to \eqref{eq:charge}.
Thus $|X^* \cap N(v)| = 2$, which implies that $v$ is an $A_0$-vertex, since $X^* \subseteq A_2$ and $Y^* \subseteq A_1$.
This is a contradiction since an $A_0$-vertex is not adjacent to an $A_2$-vertex.
\end{proof}

By Claim~\ref{clm:final_ch}, the final charge of each vertex is non-positive, and we reach a contradiction to \eqref{eq:C}.
Hence, the counterexample $G$ cannot exist. 

\section{Ratio of independent domination and  domination}\label{sec:ratio}
% \section{Proof of Theorem~\ref{thm:ratio}}\label{sec:ratio}

In this section, we prove  Theorem~\ref{thm:ratio}.
Given a graph $G$ and $v\in X \subseteq V(G)$, the {\it $X$-external private neighborhood} of a vertex $v$ is defined by $\epn(v,X) = \{u \in V(G)\setminus X \mid N_G[u] \cap X = \{v\}\}$.
A dominating set $X$ of $G$ inducing a graph with maximum degree at most $1$ is  a {\it near independent dominating set.}

\begin{lemma}\label{clm:igd}
For a near independent dominating set $X$ of a cubic graph $G$, $i(G) \le |X|+ \frac{1}{2}n_1(G[X])$.
\end{lemma}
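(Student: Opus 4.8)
The plan is to repair the single obstruction to independence in $X$ while adding at most $m := \frac12 n_1(G[X])$ new vertices. Since $X$ induces a subgraph of maximum degree at most $1$, $G[X]$ is a disjoint union of a matching $M$ with $|E(M)| = m$ edges together with a set $I$ of isolated vertices, so $n_1(G[X]) = 2m$ and the target inequality is exactly $i(G) \le |X| + m$. Two structural facts drive everything: a vertex of $I$ has no neighbor in $X$ at all, and the two ends of an edge of $M$ are each other's unique neighbor inside $X$.

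First I would pass to a strictly smaller independent set. Let $X'$ consist of all of $I$ together with exactly one (arbitrarily chosen) endpoint of each edge of $M$, and let $R$ be the set of the $m$ discarded endpoints. Then $|X'| = |X| - m$, and $X'$ is independent: an $I$-vertex has no $X$-neighbor, and two retained endpoints from distinct matching edges are non-adjacent because each is adjacent inside $X$ only to its own partner.

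Next I would pin down the vertices that $X'$ fails to dominate. Every discarded endpoint is still dominated by its retained partner, so the set $D$ of vertices missed by $X'$ lies in $V(G) \setminus X$ and consists precisely of the vertices all of whose $X$-neighbors were discarded; that is, $D \subseteq N_G(R) \setminus X$, and in private-neighborhood language $\bigcup_{r\in R}\epn(r, X) \subseteq D$. This single description gives both of the facts I need at once. On the counting side, each $r \in R$ is a cubic vertex with only one $X$-neighbor, hence exactly two neighbors outside $X$, so $|D| \le |N_G(R)\setminus X| \le 2m$. On the independence side, because every $X$-neighbor of a vertex of $D$ belongs to $R$ and $R \cap X' = \emptyset$, no vertex of $D$ has a neighbor in $X'$.

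Finally I would dominate $D$ independently and cheaply by taking $S'$ to be a maximal independent set of the induced subgraph $G[D]$. Being maximal, $S'$ dominates all of $D$; being contained in $D$, it is independent and has no edge to $X'$; and $|S'| \le |D| \le 2m$. Therefore $S = X' \cup S'$ is an independent dominating set of $G$ with $|S| \le (|X| - m) + 2m = |X| + m$, which yields $i(G) \le |X| + m$. The one point I would guard most carefully is the global independence of $S$: it rests entirely on the observation that an undominated vertex can reach $X$ only through discarded endpoints and therefore cannot be adjacent to the retained set $X'$. Cubicity is used solely to bound $|D|$ by $2m$, whereas the independence argument is purely structural.
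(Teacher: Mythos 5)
Your proof is correct, and it takes a genuinely different (non-inductive) route. The paper proves the lemma by induction on $n_1(G[X])$: it picks a single matched vertex $v\in X$, replaces it by a maximal independent subset of its external private neighborhood $\epn(v,X)$ (of size at most $2$ since $v$ has only two neighbors outside $X$), checks that the result is again a near independent dominating set with $n_1$ reduced by exactly $2$, and recurses — net cost $+1$ per matching edge. You instead unroll this into one global step: discard one endpoint of every matching edge simultaneously, observe that the set $D$ of vertices left undominated satisfies $D\subseteq N_G(R)\setminus X$ and hence $|D|\le 2m$, and repair everything at once with a maximal independent set of $G[D]$. The two arguments invoke cubicity at the identical point (a matched vertex has exactly two neighbors outside $X$; both in fact need only subcubic) and rest on the same independence mechanism: a vertex missed by the retained set can reach $X$ only through discarded endpoints, so repair vertices are automatically non-adjacent to $X'$ — this is precisely what makes the paper's $\epn$-vertices safe to add. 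What your version buys is that no invariant maintenance is needed: you never verify that intermediate sets remain near independent dominating sets, and the single maximal-independent-set step in $G[D]$ cleanly absorbs adjacencies among repair vertices coming from different matching edges, which the paper handles implicitly by recomputing $\epn$ relative to the updated set at each stage. The paper's induction, conversely, keeps each step local and never needs to name or bound the missed set $D$. Every step of your write-up checks out — $|X'|=|X|-m$, independence of $X'$, $D\cap X'=\emptyset$ and the absence of edges between $D$ and $X'$ (immediate from the definition of undominated), $|S'|\le|D|\le 2m$, and domination of $D$ by maximality — so you obtain the same bound $i(G)\le |X|+\tfrac{1}{2}n_1(G[X])$.
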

\begin{proof}
We use induction on $n_1(G[X])$.
 If $n_1(G[X])=0$, then $X$ is an independent dominating set of $G$, so the statement holds since $i(G) \le |X|$.

Now, assume  $n_1(G[X])>0$.
Take a vertex $v \in X$ with a neighbor in $X$. 
Since $G$ is cubic, $|\epn(v,X)| \le 2$.
Take a maximal independent set $I$ of $G$ such that $I\subseteq \epn(v,X)$, and let $X' = (X \cup I) \setminus \{v\}$. (Note that $I=\emptyset$ when $\epn(v,X)=\emptyset$.)
Then  $X'$ is a near independent dominating set of $G$ such that $|X'|\le|X|+1$ and $n_1(G[X']) \le n_1(G[X])-2$, so by the inductive hypothesis, we have $i(G) \le |X'|+\frac{1}{2}n_1(G[X']) \le (|X| +1) + \frac{1}{2}(n_1(G[X])-2) = |X|+ \frac{1}{2}n_1(G[X])$.
\end{proof}

\begin{proof}[Proof of Theorem~\ref{thm:ratio}]
Let $G$ be a cubic graph without 4-cycles, and let
 $D$ be a minimum dominating set of $G$ with the minimum number of edges in $G[D]$. 
Suppose that a vertex $v\in D$ has two neighbors in $D$. 
By the minimality of $|D|$, we have $|\epn(v,D)|=1$. 
Then $D'=(D \cup \epn(v,D)) \setminus \{v\}$ is a dominating set of $G$.
Note that $|D'|=|D|$ and $|E(G[D'])|<|E(G[D])|$, which is a contradiction to the choice of $D$. 
Thus, every vertex in $D$ has at most one neighbor in $D$, so $D$ is a near independent dominating set of $G$. 

Let $m_i=n_i(G[D])$. Then
\begin{eqnarray}\label{eq:D}
&&\gamma(G)=|D|=m_0+m_1.
\end{eqnarray}
If $m_0 \ge m_1$, 
then \eqref{eq:D} implies $\gamma(G) \ge 2 m_1$, so   
together with Lemma~\ref{clm:igd}, we have the desired inequality since 
\[ i(G) \le |D|+\frac{1}{2}  m_1\le \gamma(G)+ \frac{1}{4}\gamma(G)=\frac{5}{4}\gamma(G).\]
If $m_0 \le m_1$, then \eqref{eq:D} implies $\gamma(G)\ge 2m_0$,  so by Theorem~\ref{mainthm:cubic}, it holds that
\[i(G) \le \frac{5}{14}|V(G)| \le \frac{5}{14}\left(4 m_0 + 3m_1\right) = \frac{5}{14}\left(3\gamma(G)+m_0  \right) 
\le \frac{5}{14}\left(3\gamma(G)+\frac{1}{2}\gamma(G)\right)=\frac{5}{4}\gamma(G),\] where the second inequality is from the fact that $D$ is a dominating set of a cubic graph, and the third equality is from \eqref{eq:D}.
\end{proof}

\section*{Acknowledgements}
Eun-Kyung Cho was supported by Basic Science Research Program through the National Research Foundation of Korea (NRF) funded by the Ministry of Education (NRF-2020R1I1A1A0105858711).
Ilkyoo Choi was supported by the Basic Science Research Program through the National Research Foundation of Korea (NRF) funded by the Ministry of Education (NRF-2018R1D1A1B07043049), and also by the Hankuk University of Foreign Studies Research Fund.
Boram Park and Hyemin Kwon were supported by Basic Science Research Program through the National Research Foundation of Korea (NRF) funded by the Ministry of Science, ICT and Future Planning (NRF-2018R1C1B6003577).

\bibliographystyle{abbrv}
\bibliography{ref}

\end{document}